% SIAM Journal on Scientific Computing Submission
% Interpolated Discrepancy Data Assimilation for PDEs with Sparse Observations

%\documentclass[final,hidelinks,onefignum,onetabnum]{siamart250211} % no line numbers
\documentclass[hidelinks,onefignum,onetabnum]{siamart250211} % Uncomment for line numbers

% --- Package Loading ---
% Note: amsmath, hyperref, natbib, ntheorem are loaded by siamart250211.cls
% Note: Geometry is handled by SIAM class - do not override
\usepackage{amsfonts}  % ADD THIS LINE
\usepackage{amssymb}   % ADD THIS LINE (provides \mathbb)

\usepackage{graphicx}
\usepackage[T1]{fontenc}

\usepackage{bm}
\usepackage{color}
\usepackage{xcolor}
\usepackage{xargs}
\usepackage{setspace}
\usepackage{epstopdf}
\usepackage[colorinlistoftodos,prependcaption,textsize=tiny]{todonotes}
\usepackage[font=small,format=plain,labelfont=bf,up,textfont=it,up]{caption}
\usepackage{subcaption}
\usepackage{tabularx}
\usepackage{multirow}
\usepackage{fancybox}
\usepackage{lscape}
\usepackage{listings}
\usepackage{float}
\usepackage{rotating}
\usepackage{url}
\usepackage{booktabs}
\usepackage{enumerate}
\usepackage{tcolorbox}
\usepackage[normalem]{ulem}

% --- Custom Commands ---
\newcommand{\term}[1]{\emph{#1}}

% Mathematical Variables & Solutions
\newcommand{\mcg}{\mathcal{G}}
\newcommand{\mcf}{\mathcal{F}}
\newcommand{\mcd}{\mathcal{D}}
\newcommand{\Bx}{\mathbf{x}}
\newcommand{\Bu}{u}
\newcommand{\Bv}{v}

% Discrepancies / Errors

\newcommand{\sdtd}{\tilde{d}}

% Parameters
\newcommand{\paramMu}{\mu}
\newcommand{\paramLambda}{\lambda}
\newcommand{\paramEta}{\eta}

% Operators and Symbols
\newcommand{\pderiv}[2]{\frac{\partial #1}{\partial #2}}

\newcommand{\grad}{\nabla}
\newcommand{\laplace}{\Delta}

\newcommand{\normLtwo}[1]{\left\lVert#1\right\rVert_{L^2}}

% Interpolation operator

% Theorem Environments - Using SIAM style
% SIAM provides: theorem, lemma, corollary, proposition, definition
% Add custom remark-like environments
\newsiamremark{remark}{Remark}
\newsiamremark{assumption}{Assumption}

% Internal Notes Commands (commented out for submission)
% \newif\ifnotesw \noteswtrue
% \newcommand{\notes}[1]{\ifnotesw \textcolor{red}{ $\clubsuit$\ {\sf \it #1}\ $\clubsuit$ }\fi}
% \newcommand{\addref}{\notes{\it add references}}
% \newcommand{\needmore}{\notes{\it need more}}
% \newcommandx{\unsure}[2][1=]{\todo[linecolor=red,backgroundcolor=red!25,bordercolor=red,#1]{#2}}

% --- Running Headers ---
\headers{Interpolated Discrepancy Data Assimilation}{T. Wu, H. Godinez, V. Gyrya, J. M. Hyman}

% --- Title and Authors ---
\title{Interpolated Discrepancy Data Assimilation for PDEs with Sparse Observations\thanks{Submitted to the editors DATE.}}

\author{Tong Wu\thanks{University of Texas San Antonio, San Antonio, TX  (\email{tong.wu@utsa.edu}).}
\and Humberto Godinez\thanks{Los Alamos National Laboratory, Los Alamos, NM (\email{hgodinez@lanl.gov}).}
\and Vitaliy Gyrya\thanks{Los Alamos National Laboratory, Los Alamos, NM (\email{vitaliy\_gyrya@lanl.gov}).}
\and James M. Hyman\thanks{Tulane University, New Orleans, LA (\email{mhyman@tulane.edu}).}}

% --- Document Start ---
\begin{document}

\maketitle

\begin{abstract}
Sparse sensor networks in weather and ocean modeling observe only a small fraction of the system state, which destabilizes standard nudging-based data assimilation. We introduce Interpolated Discrepancy Data Assimilation (IDDA), which modifies how discrepancies enter the governing equations. Rather than adding observations as a forcing term alone, IDDA also adjusts the nonlinear operator using interpolated observational information. This structural change suppresses error amplification when nonlinear effects dominate. We prove exponential convergence under explicit conditions linking error decay to observation spacing, nudging strength, and diffusion coefficient. The key requirement establishes bounds on nudging strength relative to observation spacing and diffusion, giving practitioners a clear operating window. When observations resolve the relevant scales, error decays at a user-specified rate. Critically, the error bound scales with the square of observation spacing rather than through hard-to-estimate nonlinear growth rates. We validate IDDA on Burgers flow, Kuramoto-Sivashinsky dynamics, and two-dimensional Navier-Stokes turbulence. Across these tests, IDDA reaches target accuracy faster than standard interpolated nudging, remains stable in chaotic regimes, avoids non-monotone transients, and requires minimal parameter tuning. Because IDDA uses standard explicit time integration, it fits readily into existing simulation pipelines without specialized solvers. These properties make IDDA a practical upgrade for operational systems constrained by sparse sensor coverage.
\end{abstract}

\begin{keywords}
Data Assimilation, Sparse Observations, Nudging Methods, Convergence Analysis, Navier--Stokes Equations, Nonlinear PDEs, Exponential Convergence
\end{keywords}

\begin{MSCcodes}
35Q30, 65M32, 76D05, 93E10
\end{MSCcodes}

\section{Introduction}
\label{sec:introduction}

Accurate modeling of complex physical systems often relies on partial differential equations (PDEs) to describe their evolution. However, sparse, incomplete, and irregularly distributed observational data frequently limit practical utility. Weather station networks are typically 100--200 kilometers apart, while numerical weather models resolve features at 10--25 kilometers. Ocean moorings measure temperature and salinity in hundreds of locations over millions of square kilometers. Satellite observations provide broader coverage but measure indirect quantities at coarser temporal resolutions than model time steps. Continuous data assimilation methods must bridge these gaps by combining incomplete observations with physics-based models to produce reliable forecasts.

The challenge of sparse and irregularly distributed observations has received increasing attention across data assimilation methodologies (e.g., early nudging or Newtonian relaxation approaches~\cite{anthes1974data,TheInitialization}). Traditional variational methods~\cite{le1986variational,talagrand1987variational} and ensemble Kalman filtering~\cite{evensen1994sequential,evensen2009data} handle sparse data through background error covariance matrices, but require tuning many parameters and can be expensive for high-dimensional systems. Recent advances in continuous data assimilation~\cite{azouani2014continuous,azouani2014feedback,foias2016discrete,farhat2017continuous,larios2024nonlinear} show promise for PDEs but mainly for observation-rich settings. Standard nudging techniques~\cite{TheInitialization,lewis2012,asch2016data,kalnay2003,law2015} operate on physical-space data, but often exhibit slow convergence with fixed nudging parameters. Preliminary comparative studies suggest that continuous data assimilation approaches proposed by Azouani, Olson, and Titi (AOT) can offer computational advantages over ensemble Kalman filtering while maintaining comparable accuracy~\cite{ning2024assessment}. Alternative approaches using machine learning for extremely sparse observations have emerged, but require extensive training data and computational resources that may not be available operationally.

The AOT framework introduced a feedback control mechanism that nudges model trajectories toward observations and proved convergence for a broad class of dissipative systems~\cite{cao2001numerical,olson2003determining}. Foias et al.~\cite{foias2016discrete} developed a related scheme for Navier--Stokes solutions and their statistics. Applying nudging with sparse physical-space data and fixed parameters can, however, yield suboptimal performance, especially for complex systems~\cite{ibdah2020fully,garcia2020uniform,gesho2016computational}. Recent studies~\cite{titi2023,titi2024inadequacy} explored enhancements including time-delay techniques and model error adjustments. Although these improve robustness, limitations persist with sparse data or high-dimensional systems.

Recent advances show that structural modifications to continuous data assimilation can achieve notable properties. Carlson, Larios, and Titi proved that nonlinear feedback enables superexponential convergence for two-dimensional Navier--Stokes equations~\cite{carlson2024super}. Similar nonlinear or hybrid nudging formulations have been investigated in low-dimensional and magnetohydrodynamic settings~\cite{du2021lorenz,hudson2019mhd}. Diegel et al.~\cite{diegel2025analysis} established that large nudging parameters with implicit integration algorithms can yield optimal accuracy. These methods represent significant theoretical breakthroughs but require nonlinear feedback or implicit schemes with associated computational overhead.

We propose \term{Interpolated Discrepancy Data Assimilation} (IDDA), extending the continuous data assimilation framework represented by the AOT approach~\cite{azouani2014continuous}. IDDA adapts AOT to address challenges posed by sparse observations in physical space. Many numerical simulations of AOT-based methods focused on Fourier-space observations, which may not be available in practice. IDDA operates directly on data at discrete locations, consistent with operational networks such as weather stations~\cite{TheInitialization,lewis2012}. 

The interpolated discrepancy is defined as the difference between the interpolation constructed from the observational data and that obtained from the assimilated model solution at the same spatial locations and time. In this work, the spatial interpolation operator is used to map both the true reference solution and the assimilated solution onto the same observation grid, producing two corresponding interpolations. The interpolated discrepancy is therefore the difference between these two interpolated representations, which reflects the model--data mismatch expressed on the observational scale. This quantity plays a central role in guiding model adjustment during assimilation. IDDA improves robustness by introducing the interpolated discrepancy not only as an additive feedback term, but also within the operator that governs the model dynamics. By modifying the evolution using information derived from the discrepancy on the observational scale, the method more accurately reproduces the true system dynamics and mitigates errors that arise when observations are sparse or the nudging parameter is small.

IDDA offers a complementary approach, achieving predictable exponential rates through linear nudging with standard explicit time stepping. This makes IDDA readily implementable within existing frameworks while delivering substantial improvements over standard interpolated AOT. IDDA is designed for dissipative systems with sufficient regularization. Recent work demonstrates that nudging algorithms are inadequate for non-dissipative systems lacking determining modes~\cite{titi2024inadequacy}. Our focus on equations with positive diffusion (Burgers', Navier--Stokes) or sufficient hyperdiffusion (Kuramoto--Sivashinsky) ensures that IDDA operates where nudging-based methods succeed~\cite{lunasin2017finite,markowich2016brinkman}.

After reviewing interpolated AOT and its limitations, we present IDDA's theoretical framework and implementation in Section~\ref{sec:methods}. We establish rigorous convergence theorems proving exponential error decay under explicit conditions on observation density and nudging strength. Section~\ref{sec:numerical_examples} demonstrates the effectiveness of IDDA through numerical results in benchmark cases, underscoring its potential for practical applications in fields such as weather forecasting, oceanography, and fluid dynamics. We validate IDDA through experiments on Burgers', Kuramoto--Sivashinsky, and Navier--Stokes equations. These experiments show that IDDA achieves faster and more reliable convergence than interpolated AOT in the scenarios tested, even with sparse data, consistently matching the theoretical predictions. We conclude in Section~\ref{sec:conclusion}.

\section{Continuous Data Assimilation Methods}
\label{sec:methods}

Consider the initial value problem for the PDE system
\begin{equation}
\pderiv{\Bu}{t} = \mcg[\Bu],
\label{eq:PDE_for_u_general}
\end{equation}
describing the time derivative of spatially dependent variable $\Bu(\Bx,t)$, where $\mcg[\Bu]$ is generally a nonlinear spatial differential operator. For Burgers' equation, $\mcg[\Bu]= -\Bu \grad \Bu + \paramMu \laplace \Bu$. In many situations, the \term{reference solution} $\Bu(\Bx,t)$ of~\eqref{eq:PDE_for_u_general} cannot be fully observed. For example, the initial conditions at time $t=T_0$ may be unknown, with observations of state $\Bu(\Bx,t)$ only at locations $\Bx_k$ for some period before $T_0$. Equation~\eqref{eq:PDE_for_u_general} requires initial conditions $\Bu(\Bx,T_0)$ before advancing the solution.

To determine the initial conditions, continuous data assimilation defines a modified version of \eqref{eq:PDE_for_u_general} with \term{assimilated solution} $\Bv(\Bx,t)$ that converges to the reference solution: $\Bv(\Bx,t) \rightarrow \Bu(\Bx,t)$. The error $\normLtwo{\Bv(\cdot,t)-\Bu(\cdot,t)}$ becomes small as $t$ increases. When simultaneous observations are available, the assimilated solution $\Bv(\Bx,t)$ converges to $\Bu(\Bx,t)$ for sufficiently large $t$. This approach enables the numerical approximation of $\Bu(\Bx,t)$ without requiring knowledge of the initial conditions. In practice, neither the PDE model nor the observational data is perfect. Continuous data assimilation accounts for both model and observation uncertainties. It avoids rapid, unrealistic corrections in the assimilated state while maintaining physical consistency. If historical data are available for the interval $0 \leq t \leq T_0$, the assimilated solution at $t = T_0$ can serve as the initial condition for the original PDE: $\Bu(\Bx,T_0) = \Bv(\Bx,T_0)$. The original PDE then predicts the solution for $t > T_0$.

Continuous data assimilation algorithms utilize observed data to define forcing functions derived from the interpolated discrepancy, which nudge the assimilated solution $\Bv$ towards the reference solution $\Bu$. These functions represent approximations (interpolants or projections) constructed from observational data. Projections onto low-dimensional Fourier space were widely used in many AOT-based numerical simulations. In this work, focusing on discrete measurements at locations $\Bx_k$, $\sdtd(\Bx,t)$ represents spatial interpolants constructed using the difference between observational data $\Bu(\Bx_k,t)$ and assimilated solution values $\Bv(\Bx_k,t)$ at $\Bx_k$. The interpolation operator $\widetilde{\cdot}$ depends on the observation spacing $h$ (characteristic distance between observation points) and the chosen interpolation scheme (linear, spline, etc.). Unlike projections, spatial interpolation can introduce errors that must be carefully controlled.

In this section, we denote by 
$u(\Bx,t)$ the \emph{reference solution} of~\eqref{eq:PDE_for_u_general}, 
$v(\Bx,t)$ the \emph{assimilated solution}, 
and $d = u - v$ the corresponding error between reference and assimilated solutions. 
The interpolated discrepancy $\tilde{d}$ is obtained by applying the spatial interpolation operator 
$\widetilde{\cdot}$ to the discrete set of observational differences 
$\{d(\Bx_k,t)\}$ collected at observation points $x_k$ with spatial resolution $h$. 
The analysis involves several quantities introduced in the following assumptions, including 
the nudging parameter~$\lambda$, diffusion coefficient~$\mu$, 
interpolation constant~$C$, and Lipschitz constant~$L$. 
All norms $\|\cdot\|$ and inner products $\langle\cdot,\cdot\rangle$ are taken in $L^2(\Omega)$ unless otherwise specified.

\subsection{Standing Assumptions}
\label{sec:assumptions}

Throughout this section, we make the following assumptions about the PDE operator, the interpolation scheme, and observations: Let $\Omega\subset\mathbb{R}^d$ be a bounded domain with periodic or homogeneous Dirichlet boundary conditions. Consider the dissipative PDE
$$
u_t = \mcf[u]+\mcd[u].
$$
Assume:

\begin{assumption}[Dissipation of $\mcd$]\label{assump:diffusion}

We assume that $\mcd[\cdot]$ is a linear dissipative operator satisfying
$$
\langle \mcd[\phi], \phi\rangle \le -\mu\|\nabla \phi\|^2,\qquad \mu>0.
$$
In cases where physical diffusion is negligible ($\mu \approx 0$), the artificial diffusion $\eta > 0$ is added as discussed in Remark~\ref{rem:art_diss}. This ensures well-posedness and provides regularization to balance interpolation errors.
\end{assumption}

\begin{assumption}[Lipschitz Continuity of $\mcf$]\label{assump:lipschitz}

The operator $\mcf[\cdot]$ satisfies a Lipschitz condition in $L^2(\Omega)$:
$$
\|\mcf[u] - \mcf[v]\| \leq L \|u-v\|
$$
for all sufficiently smooth functions $u, w$, where constant $L > 0$ depends on problem parameters and bounds on solution norms (e.g., $\|u\|_{L^\infty}$, $\|v\|_{L^\infty}$ and viscosity parameter for viscous Burgers' equation). For PDEs considered here, solution norms remain uniformly bounded in time due to absorbing sets or global attractors, justifying the treatment of $L$ as constant in convergence analysis.
\end{assumption}

\begin{assumption}[Interpolation Error Bound]\label{assump:interpolation}

For the chosen interpolation scheme applied to observations spatial resolution of $h$, there exists constant $C > 0$ such that for any sufficiently smooth function $f$ with $f \in H^1(\Omega)$ and bounded gradient:
$$
\|f - \tilde{f}\| \leq C h \|\nabla f\|,
$$
where $\tilde{f}$ is the interpolant constructed from samples with resolution $h$. The constant $C$ depends on the interpolation method and the geometry of the domain.
\end{assumption}

\begin{assumption}[Observation Quality]\label{assump:observations}

Observations $\{u(x_k, t)\}$ are exact measurements of the true solution at discrete points $x_k \in \Omega$ for $t \geq 0$. Extension to noisy observations is deferred to future work.
\end{assumption}

Before presenting the main convergence theorem for IDDA, we establish a key technical inequality that quantifies how well the interpolated discrepancy $\sdtd$ approximates the true error $d$ in the $L^2$ inner product. This result is fundamental to the convergence analysis.

\begin{lemma}[Interpolation Inner Product Bound]\label{lem:interp_coercivity}
Let $\,\widetilde{\cdot}: C^1(\Omega) \to C(\Omega)$ be an interpolation operator satisfying Assumption~\ref{assump:interpolation}:
$$
\|f - \tilde{f}\| \leq Ch\|\nabla f\|.
$$
Then for any $f \in C^1(\Omega)$,
\begin{equation}\label{eq:interp_inner_product}
\langle \tilde{f}, f\rangle \geq \alpha\|f\|^2 - \frac{C^2h^2}{2}\|\nabla f\|^2,
\end{equation}
where $\alpha\ge1/2$.
\end{lemma}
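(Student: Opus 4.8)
The plan is to establish the bound with the explicit value $\alpha = 1/2$, which immediately yields a value satisfying $\alpha \geq 1/2$ as claimed. The entire strategy rests on a single, standard idea: decompose the interpolant as $\tilde{f} = f + (\tilde{f} - f)$, treat $f$ as producing the exact coercive term $\|f\|^2$, and control the interpolation error $\tilde{f} - f$ as a small perturbation using Assumption~\ref{assump:interpolation}. No regularity beyond $f \in C^1(\Omega)$ is needed, since the interpolation estimate is assumed outright.

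Concretely, I would first write the identity
$$
\langle \tilde{f}, f\rangle = \langle f, f\rangle + \langle \tilde{f} - f, f\rangle = \|f\|^2 + \langle \tilde{f} - f, f\rangle,
$$
which isolates the leading term. Next I would bound the cross term from below by Cauchy–Schwarz followed by the interpolation estimate,
$$
\langle \tilde{f} - f, f\rangle \geq -\|\tilde{f} - f\|\,\|f\| \geq -\paramC\paramh\|\grad f\|\,\|f\|.
$$
The final step is to split the mixed product $\paramC\paramh\|\grad f\|\,\|f\|$ with Young's inequality $ab \leq \tfrac{1}{2}a^2 + \tfrac{1}{2}b^2$, taking $a = \|f\|$ and $b = \paramC\paramh\|\grad f\|$, which gives
$$
\paramC\paramh\|\grad f\|\,\|f\| \leq \tfrac{1}{2}\|f\|^2 + \tfrac{\paramC^2\paramh^2}{2}\|\grad f\|^2.
$$
Substituting back leaves $\langle \tilde{f}, f\rangle \geq \tfrac{1}{2}\|f\|^2 - \tfrac{\paramC^2\paramh^2}{2}\|\grad f\|^2$, exactly the claimed inequality~\eqref{eq:interp_inner_product}.

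I do not anticipate a genuine obstacle: the argument is a textbook coercivity-via-perturbation estimate. The only point that requires care is the calibration of the Young's inequality weights. A symmetric (unit-weight) split is precisely what is needed so that half of $\|f\|^2$ survives as the coercive part while the remainder is packaged into the correction term with the exact coefficient $\tfrac{\paramC^2\paramh^2}{2}$ stated in the lemma; any asymmetric weighting would trade a smaller $\alpha$ for a smaller gradient penalty, so the choice $\alpha = 1/2$ reflects the natural balanced split rather than a sharp optimization.
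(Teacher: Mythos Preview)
Your argument is correct and reaches exactly the stated inequality with $\alpha=1/2$. The paper takes a slightly different route: it squares the interpolation bound directly, expands $\|f-\tilde f\|^2=\|\tilde f\|^2-2\langle\tilde f,f\rangle+\|f\|^2\le C^2h^2\|\nabla f\|^2$, and rearranges to obtain $\langle\tilde f,f\rangle\ge\tfrac{1}{2}\big(\|\tilde f\|^2+\|f\|^2\big)-\tfrac{1}{2}C^2h^2\|\nabla f\|^2$, then drops $\|\tilde f\|^2\ge 0$. Your decomposition $\langle\tilde f,f\rangle=\|f\|^2+\langle\tilde f-f,f\rangle$ followed by Cauchy--Schwarz and Young's inequality is equally valid and arrives at the same endpoint. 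The one thing the paper's expansion buys is that it keeps the term $\tfrac{1}{2}\|\tilde f\|^2$ visible, so any auxiliary bound of the form $\|\tilde f\|\ge c\|f\|$ immediately upgrades the constant to $\alpha=\tfrac{1}{2}(1+c^2)>\tfrac{1}{2}$; this is precisely why the lemma is stated with ``$\alpha\ge 1/2$'' rather than equality. Your Cauchy--Schwarz step discards that information at the outset, so while your proof delivers the baseline $\alpha=1/2$ cleanly, it would need modification to capture the sharper constant when more is known about the interpolant.
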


\begin{proof}
Starting from the interpolation bound,
$$
\|f - \tilde{f}\| \leq C h \|\nabla f\|,
$$
we have
$$
\langle \tilde{f} - f,\, \tilde{f} - f \rangle \leq C^2 h^2 \|\nabla f\|^2.
$$
Expanding the left-hand side gives
$$
\|\tilde{f}\|^2 - 2 \langle \tilde{f}, f \rangle + \|f\|^2 \leq C^2 h^2 \|\nabla f\|^2.
$$
Rearranging terms yields
$$
\langle \tilde{f}, f \rangle 
\geq \frac{1}{2} \big( \|\tilde{f}\|^2 + \|f\|^2 \big)
   - \frac{1}{2} C^2 h^2 \|\nabla f\|^2.
$$
For any non-trivial interpolation method, $\|\tilde{f}\|\ge c\|f\|$ for some $c\ge 0$ (in the general case $c=0$),
$$
\langle \tilde{f}, f \rangle 
\geq \alpha \|f\|^2 - \tfrac{1}{2} C^2 h^2 \|\nabla f\|^2,
$$
where $\alpha=\frac{1+c}{2}\ge \frac{1}{2}$, which establishes~\eqref{eq:interp_inner_product}.
\end{proof}
% \begin{remark}[Lower Bound under Poincar\'e Inequality]
% For a general function $f$, no positive constant $c$ exists such that 
% $\|\tilde{f}\| \ge c\|f\|$; in this case, the best universal bound is $c=0$. 
% However, if $f \in H_0^1(\Omega)$ or $f$ is periodic with zero mean in $\Omega$, then the Poincar\'e inequality 
% $\|f\| \le C_P \|\nabla f\|$ implies
% $$
% \|\tilde{f}\| 
% \;\ge\; \|f\| - \|f - \tilde{f}\|
% \;\ge\; \bigl(1 - C h / C_P\bigr)\|f\|.
% $$
% Hence, $\|\tilde{f}\| \ge c(h)\|f\|$ with $c(h) = 1 - C h / C_P > 0$ 
% whenever $h < C_P / C$.
% \end{remark}

\subsection{Convergence Theory for Interpolated Discrepancy Data Assimilation}
\label{sec:IDDA}

We now present the main theoretical result: rigorous exponential convergence of IDDA under explicit, verifiable conditions on observation density and nudging strength.

Assume that the operator $\mcg$ splits into non-diffusive ($\mcf$) and diffusive ($\mcd$) terms: $\mcg[u]=\mcf[u]+\mcd [u]$. For instance, $\mcd[u]=\paramMu\laplace u$ with $\paramMu > 0$. The IDDA formulation modifies the assimilated system by incorporating the interpolated discrepancy $\sdtd$ directly into the non-diffusive operator:
\begin{equation}
\label{eq:IDDA_model}
  \pderiv{v}{t}=\mcf[v+\sdtd] + \mcd[v] + \paramLambda\sdtd,
\end{equation}
where $\paramLambda > 0$ is the nudging parameter that controls the target convergence rate.

\begin{remark}[Operator Classification]\label{rem:operator_classification}
The decomposition $\mcg[u] = \mcf[u] + \mcd[u]$ is not unique for all PDEs. Appropriate classification is essential for IDDA's effectiveness. The guiding principle: $\mcf$ should contain terms that drive nonlinear dynamics, instabilities, or pattern formation (where incorporating observational information through $\mcf[v+\tilde{d}]$ most benefits accuracy). In contrast, $\mcd$ contains standard stabilizing dissipation providing regularization independent of observations.
\end{remark}

% IDDA Paper - Segment 3: Main Theorem and Proof

\begin{theorem}[IDDA Exponential Convergence]\label{thm:IDDA_convergence}
Let $\Omega\subset\mathbb{R}^d$ be a bounded domain with periodic or homogeneous Dirichlet boundary conditions, and consider the dissipative PDE
$$   
u_t = \mcf[u]+\mcd[u],
$$
where $u(\mathbf{x},t)$ is the reference solution. Assume that Assumptions~\ref{assump:diffusion}--\ref{assump:observations} hold. Let $v(\mathbf{x},t)$ be the assimilated solution satisfying the IDDA dynamics
\begin{equation}\label{eq:IDDA_dynamics_thm}
v_t = \mcf[v+\sdtd] + \mcd[v] + \lambda\,\sdtd, 
\qquad d := u-v.
\end{equation}
If the nudging parameter $\lambda$ and observation spacing $h$ satisfy
\begin{equation}\label{eq:IDDA_cond}
\frac{L^2 C^2 h^2}{2\alpha \mu}<\lambda<\frac{\mu}{C^2h^2},
\end{equation}
then the $L^2$ error norm decays exponentially:
\begin{equation}\label{eq:IDDA_decay}
    \|d(\Bx,t)\| \le e^{-\gamma t}\,\|d(\Bx,0)\|,
    \qquad
    \gamma = \lambda \alpha - \frac{L^2 C^2 h^2}{2\mu} > 0.
\end{equation}
\end{theorem}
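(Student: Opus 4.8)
The plan is to derive an energy inequality for the squared $L^2$ norm of the error $d = u - v$ and close it by Gr\"onwall. First I would subtract the IDDA dynamics~\eqref{eq:IDDA_dynamics_thm} from the reference equation $u_t = \mcf[u] + \mcd[u]$. Using linearity of $\mcd$ (Assumption~\ref{assump:diffusion}), the diffusive terms combine into $\mcd[d]$, and the error equation becomes
$$d_t = \bigl(\mcf[u] - \mcf[v + \sdtd]\bigr) + \mcd[d] - \lambda\,\sdtd.$$
The key algebraic observation, which drives the whole argument, is that the argument mismatch inside $\mcf$ is $u - (v + \sdtd) = d - \sdtd$: the interpolated discrepancy cancels the leading part of the error, leaving only the interpolation residual. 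Combining the Lipschitz bound (Assumption~\ref{assump:lipschitz}) with the interpolation error bound (Assumption~\ref{assump:interpolation}) applied to $d$ then yields $\norm{\mcf[u] - \mcf[v+\sdtd]} \le L\,\norm{d - \sdtd} \le L\,C\,h\,\norm{\grad d}$. This is precisely where the factor $h$ enters and converts the nonlinear discrepancy into a quantity controlled by the gradient.

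Next I would take the $L^2$ inner product of the error equation with $d$, so the left-hand side becomes $\tfrac12 \tderiv{}{t}\norm{d}^2$, and bound the three right-hand terms separately. The diffusive term contributes $\innerprod{\mcd[d]}{d} \le -\mu\norm{\grad d}^2$ directly from Assumption~\ref{assump:diffusion}. For the nudging term I would invoke Lemma~\ref{lem:interp_coercivity} with $f = d$, giving $-\lambda\innerprod{\sdtd}{d} \le -\lambda\alpha\norm{d}^2 + \tfrac{\lambda C^2 h^2}{2}\norm{\grad d}^2$. For the nonlinear term I would apply Cauchy--Schwarz and then Young's inequality to the product $L\,C\,h\,\norm{\grad d}\,\norm{d}$, choosing the balance parameter so that the $\norm{d}^2$ coefficient is exactly $\tfrac{L^2 C^2 h^2}{2\mu}$; the conjugate term is then forced to be $\tfrac{\mu}{2}\norm{\grad d}^2$.

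The decisive step is collecting the $\norm{\grad d}^2$ coefficients, which total $-\mu + \tfrac{\mu}{2} + \tfrac{\lambda C^2 h^2}{2} = -\tfrac{\mu}{2} + \tfrac{\lambda C^2 h^2}{2}$. This is where I expect the main subtlety: the upper bound $\lambda < \mu/(C^2 h^2)$ in~\eqref{eq:IDDA_cond} is exactly the condition making this coefficient non-positive, so the whole gradient contribution can be discarded rather than delicately controlled. After dropping it, the surviving inequality reads $\tfrac12\tderiv{}{t}\norm{d}^2 \le -\bigl(\lambda\alpha - \tfrac{L^2 C^2 h^2}{2\mu}\bigr)\norm{d}^2 = -\gamma\norm{d}^2$, and the lower bound $\lambda > L^2 C^2 h^2/(2\alpha\mu)$ guarantees $\gamma > 0$. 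A final integration of the linear differential inequality $\tderiv{}{t}\norm{d}^2 \le -2\gamma\norm{d}^2$ yields $\norm{d(t)}^2 \le e^{-2\gamma t}\norm{d(0)}^2$, and taking square roots establishes~\eqref{eq:IDDA_decay}. The only genuine care needed is the Young's-inequality split: tuning the balance parameter to simultaneously match the target $\norm{d}^2$ coefficient and leave precisely $\mu/2$ for the gradient term is what makes the two conditions in~\eqref{eq:IDDA_cond} align cleanly.
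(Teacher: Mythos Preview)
Your proposal is correct and matches the paper's proof essentially step for step: the same error equation, the same three-term energy estimate using Assumption~\ref{assump:diffusion}, Lemma~\ref{lem:interp_coercivity}, and the Lipschitz/interpolation chain, the same Young's-inequality split producing $\tfrac{\mu}{2}\norm{\grad d}^2 + \tfrac{L^2C^2h^2}{2\mu}\norm{d}^2$, and the same use of the two-sided condition~\eqref{eq:IDDA_cond} to discard the gradient term before Gr\"onwall. Nothing is missing or different.
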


\begin{proof}[Proof of Theorem~\ref{thm:IDDA_convergence}]

~

Subtracting the true equation $u_t = \mcf[u] + \mcd[u]$ from the IDDA dynamics \eqref{eq:IDDA_dynamics_thm} yields the error evolution equation:
\begin{equation}\label{eq:IDDA_error_evolution}
\partial_t d = \mcd[d] + \big(\mcf[u]-\mcf[v+\sdtd]\big) - \lambda\,\sdtd.
\end{equation}
Taking the $L^2$ inner product of \eqref{eq:IDDA_error_evolution} with $d$ gives
\begin{equation}\label{eq:IDDA_energy}
\frac{1}{2}\frac{d}{dt}\|d\|^2
= \langle \mcd[d],d\rangle + \langle \mcf[u]-\mcf[v+\sdtd], d\rangle - \lambda \langle \sdtd, d\rangle.
\end{equation}
We now bound each term on the right-hand side.\\
\textbf{Dissipation term.} By Assumption~\ref{assump:diffusion},
\begin{equation}\label{eq:dissipative_bound}
\langle \mcd[d], d\rangle \le -\mu\|\nabla d\|^2.
\end{equation}
\textbf{Non-diffusive term.} Using the Lipschitz condition from Assumption~\ref{assump:lipschitz}:
$$
\|\mcf[u]-\mcf[v+\sdtd]\|
\le L\|u-(v+\sdtd)\|
= L\|d-\sdtd\|.
$$
Applying the interpolation error bound from Assumption~\ref{assump:interpolation}:
$$
\|d-\sdtd\| \le C h \|\nabla d\|.
$$
Combining these with the Cauchy-Schwarz inequality:
\begin{equation}\label{eq:nonlinear_bound}
\langle \mcf[u]-\mcf[v+\sdtd], d\rangle
\le \|\mcf[u]-\mcf[v+\sdtd]\|\|d\|
\le L C h \|\nabla d\|\|d\|.
\end{equation}
\textbf{Nudging term.} Applying Lemma~\ref{lem:interp_coercivity} with $\alpha = 1/2$:
\begin{equation}\label{eq:nudging_bound}
-\,\lambda\langle \sdtd, d\rangle
\le -\,\lambda\alpha\|d\|^2 + \frac{1}{2}\lambda C^2h^2 \|\nabla d\|^2.
\end{equation}
Substituting bounds \eqref{eq:dissipative_bound}, \eqref{eq:nonlinear_bound}, and \eqref{eq:nudging_bound} into \eqref{eq:IDDA_energy}:
\begin{equation}\label{eq:combined_before_young}
\frac{1}{2}\frac{d}{dt}\|d\|^2
\le -\mu\|\nabla d\|^2
+ L C h \|\nabla d\|\|d\|
- \frac{\lambda}{2}\|d\|^2
+ \frac{\lambda C^2h^2}{2} \|\nabla d\|^2.
\end{equation}
To handle the mixed term $LCh\|\nabla d\|\|d\|$, we apply Young's inequality. We use the form\footnote{Young's inequality has multiple equivalent forms: $ab \leq \frac{a^2}{2\epsilon} + \frac{\epsilon b^2}{2}$ or $ab \leq \frac{\epsilon a^2}{2} + \frac{b^2}{2\epsilon}$. The choice of form determines which term involves $\epsilon$ in the numerator versus denominator. We use the second form here.}
\begin{equation}\label{eq:young_form}
ab \leq \frac{\epsilon a^2}{2} + \frac{b^2}{2\epsilon}
\end{equation}
with $a = LCh\|\nabla d\|$, $b = \|d\|$, and parameter choice $\epsilon = \frac{\mu}{L^2C^2h^2}$. This yields:
\begin{equation}\label{eq:mixed_term}
L C h \|\nabla d\|\|d\|
\le \frac{\mu}{2}\|\nabla d\|^2 + \frac{L^2 C^2 h^2}{2\mu}\|d\|^2.
\end{equation}
Substituting \eqref{eq:mixed_term} into \eqref{eq:combined_before_young}:
\begin{align}
&\frac{1}{2}\frac{d}{dt}\|d\|^2
\le - \Big(\lambda\alpha-\frac{L^2 C^2 h^2}{2\mu}\Big)\|d\|^2
- \frac{1}{2}\Big(\mu-\lambda C^2 h^2\Big)\|\nabla d\|^2.
\label{eq:final_energy_bound}
\end{align}

Under condition \eqref{eq:IDDA_cond}, both coefficients are strictly positive. Discarding the non-negative gradient term in \eqref{eq:final_energy_bound}:
\begin{equation}\label{eq:gronwall_ready}
\frac{d}{dt}\|d\|_{L^2}^2 \le -2\gamma \|d\|_{L^2}^2,\qquad
\gamma = \lambda\alpha - \frac{L^2 C^2 h^2}{2\mu} > 0.
\end{equation}
Applying Gr\"onwall's inequality to \eqref{eq:gronwall_ready}:
$$
\|d(t)\|^2 \le e^{-2\gamma t}\|d(0)\|^2,
$$
which immediately gives \eqref{eq:IDDA_decay} by taking square roots.
\end{proof}

\subsection{Comparison and Interpretation}
\label{sec:comparison}

Building upon the exponential convergence established in Theorem~\ref{thm:IDDA_convergence}, 
we now examine its analytical implications, compare the proposed IDDA formulation with the standard interpolated AOT method, 
and discuss the practical considerations relevant to implementation. 
This subsection unifies the interpretation of the theoretical bounds, parameter dependencies, 
and numerical behavior observed in Section~\ref{sec:numerical_examples}.

\paragraph{Comparison with interpolated AOT.}
The standard interpolated AOT approach augments the model dynamics with a nudging term proportional to the interpolated discrepancy:
\begin{equation*}
\frac{\partial v}{\partial t}
   = \mcf[v] + \mcd[v] + \lambda\,\sdtd.
\end{equation*}
Under identical assumptions, AOT satisfies the convergence condition 
(see Appendix~\ref{sec:AOT_appendix} for detailed proof)
\begin{equation}\label{eq:AOT_cond_comparison}
\frac{L}{\alpha} < \lambda_{\mathrm{AOT}} < \frac{2\mu}{C^2h^2},
\end{equation}
with decay rate $\gamma_{\mathrm{AOT}} = \lambda\alpha - L$ for the $L^2$ norm. 
The lower bound comparison
$$
\lambda_{\mathrm{AOT}} > \frac{L}{\alpha},
\qquad 
\lambda_{\mathrm{IDDA}} > \frac{L^2C^2h^2}{2\alpha\mu}
$$
highlights the difference in sensitivity to the Lipschitz constant~$L$: 
for reasonably dense observations (small~$h$), IDDA's lower bound becomes negligible while AOT's remains fixed at~$O(L)$. 
This explains IDDA's superior performance with moderate nudging parameters: 
the structural modification $\mcf[v] \rightarrow \mcf[v+\sdtd]$ removes the direct dependence on~$L$ from the convergence condition, 
replacing it with a term that vanishes as~$O(h^2)$. 

For a given~$\lambda$, the expected convergence rates satisfy
$$
\gamma_{\mathrm{AOT}} \approx \lambda\alpha - L,
\qquad 
\gamma_{\mathrm{IDDA}} \approx \lambda\alpha - O(h^2),
$$
so the convergence rate of AOT may be reduced due to rapid change in the dynamics of PDEs.
In the fully observed or fine-resolution limit ($h \to 0$, $\alpha \to 1$), 
the decay rate in~\eqref{eq:IDDA_decay} approaches $\lambda$ for $\|d\|_{L^2}$. 
Consequently, IDDA achieves an exponential convergence rate that is closely related to the user-defined feedback strength $\lambda$, 
a trend consistently observed in numerical experiments even under relatively sparse observations. 

It appears that AOT admits a formally large upper bound on~$\lambda$. 
However, for dissipative systems with sufficiently fine observational resolution, 
a stronger observability condition on the interpolation operator typically holds:
$$
\langle \tilde{f}, f \rangle \ge \alpha^* \|f\|_{L^2}^2,
$$
for some constant $\alpha^* \in (0,1]$. 
Here, $\alpha^*$ quantifies the fraction of the system’s energy represented within the observed (interpolated) subspace. 
Under this condition, the $L^2$ energy estimates for IDDA and AOT become
$$
\frac{1}{2}\frac{d}{dt}\|d\|_{L^2}^2
\le -\Big(\lambda_{\mathrm{IDDA}}\alpha^* - \frac{L^2 C^2 h^2}{2\mu}\Big)\|d\|_{L^2}^2
- \frac{1}{2}\mu\|\nabla d\|_{L^2}^2,
$$
and
$$
\frac{1}{2}\frac{d}{dt}\|d\|_{L^2}^2
\le -\Big(\lambda_{\mathrm{AOT}}\alpha^* - L\Big)\|d\|_{L^2}^2
- \frac{1}{2}\mu\|\nabla d\|_{L^2}^2,
$$
respectively. 
Consequently, for sufficiently small observation spacing~$h$, 
an effective nudging parameter~$\lambda$ always exists and, in practice, 
does not require an explicit upper bound. 
This reflects the empirical observation that both AOT and IDDA remain stable for large~$\lambda$ 
once the interpolation captures all dynamically determining scales. If important active modes are omitted ($h$ is not sufficiently small and $\alpha^* \approx 0$) 
and the physical diffusion~$\mu$ is weak, 
the feedback term cannot enforce convergence regardless of the value of~$\lambda$. 
This condition formalizes the intuitive requirement that the observational network must resolve all dynamically determining scales of the system.

\paragraph{Evaluation of nonlinear terms in practice.}
The Lipschitz condition for the operator $\mcf[u]$ is imposed with respect to the state variable~$u$. 
In numerical implementations, the modified term $\mcf[v+\sdtd]$ should be evaluated in a manner consistent with the smoothness of the interpolation method. 
For instance, in the viscous Burgers equation where $\mcf[u] = u\,u_x$, 
if $\sdtd$ is obtained via piecewise linear interpolation, 
it is preferable to compute
$$
\mcf[v+\sdtd] = (v+\sdtd)\,v_x,
$$
rather than directly differentiating $\sdtd$ as in $(v+\sdtd)(v+\sdtd)_x$, 
to avoid artificial oscillations or singularities caused by the nonsmooth derivative~$\sdtd_x$. 
Such implementation choices ensure that the analytical assumptions underlying the convergence theorem remain valid at the discrete level.

\begin{remark}[Artificial Dissipation] \label{rem:art_diss}
If physical diffusion is negligible ($\mu \approx 0$), we can optionally add artificial dissipation to the interpolated discrepancy:
\begin{equation}
\label{eq:IDDA_with_eta}
  \pderiv{v}{t}=\mcf[v+\sdtd] + \mcd[v] + \paramLambda\sdtd - \paramEta\laplace\sdtd, \quad (\paramEta > 0).
\end{equation}
This provides additional regularization at the interpolation scale without over smoothing the physical dynamics. The convergence result extends directly if the interpolation operator $\widetilde{\cdot}$ satisfies the following $H^1$-stability and alignment condition:  
there exists a constant $\kappa\in(0,1]$ such that
\begin{equation}\label{assump:H1-coercive}
\|\nabla \tilde{f}\| \le \|\nabla f\|,
\qquad 
\langle \nabla \tilde{f}, \nabla f\rangle \ge \kappa \|\nabla \tilde{f}\|^2,
\quad \forall f \in H^1(\Omega),
\end{equation}
which implies
$$
\eta\langle \Delta\tilde{d}, d\rangle
= -\,\eta\langle \nabla\tilde{d}, \nabla d\rangle
\le -\,\eta\kappa\,\|\nabla\tilde{d}\|^2
\le -\,\eta\kappa\,\|\nabla d\|^2.
$$
Then
$$
\frac{1}{2}\frac{d}{dt}\|d\|^2
\le -\Big(\lambda\alpha - \frac{L^2 C^2 h^2}{2(\mu+\eta\kappa)}\Big)\|d\|^2
- \frac{1}{2}\Big((\mu+\eta\kappa) - \lambda C^2 h^2\Big)\|\nabla d\|^2.
$$
If the parameters $\lambda$ and $h$ satisfy
\begin{equation}\label{eq:IDDA_cond_eta}
\frac{L^2 C^2 h^2}{2\alpha(\mu+\eta\kappa)} 
\;<\; \lambda \;<\; \frac{\mu+\eta\kappa}{C^2 h^2},
\end{equation}
then
$$
\|d(t)\| \le e^{-\gamma_\eta t}\|d(0)\|,
\qquad
\gamma_\eta = \lambda\alpha - \frac{L^2 C^2 h^2}{2(\mu+\eta\kappa)} > 0.
$$
The coefficient $\kappa \in (0,1]$ in~\eqref{assump:H1-coercive} quantifies how well the interpolation 
$\widetilde{\cdot}$ preserves energy alignment in the $H^1$ inner product.
It measures the degree to which the interpolated gradient $\nabla\tilde{f}$ aligns with the true gradient $\nabla f$:
$$
\kappa = \inf_{f\neq 0} 
\frac{\langle \nabla \tilde{f}, \nabla f\rangle}{\|\nabla \tilde{f}\|^2}.
$$
For exact $H^1$ projections (e.g., Galerkin projection onto a finite element or Fourier subspace), $\kappa=1$. For linear or cubic spline interpolation on uniform grids, $\kappa$ typically satisfies $\kappa=1-O(h^2)$ depending on boundary conditions and grid resolution (see \cite{brenner2008mathematical}). For smooth kernel-based interpolations (e.g., Gaussian or RBF), $\kappa$ decreases slightly with kernel width $h$, but remains positive for stable interpolation schemes.
The constant $\kappa$ thus represents the ``energy efficiency'' of the interpolation: larger $\kappa$ improves dissipation alignment and expands the admissible range of $\lambda$ in~\eqref{eq:IDDA_cond_eta}. 
In the limit $\kappa\to1$, the artificial dissipation term contributes its full stabilizing strength $\eta$ to the effective diffusion $\mu+\eta$.

\end{remark}

\begin{remark}[Time-Dependent Nudging Parameter] \label{rem:time_dep_rate}
Choosing $\lambda=\lambda(t)$ allows time-dependent convergence rates, potentially useful for adaptive assimilation strategies (\cite{larios2024nonlinear}). However, rapidly varying $\lambda(t)$ may require smaller time steps to maintain numerical stability.
\end{remark}

\begin{remark}[Time-Discrete Observations] \label{rem:time_disc_obs}
If observations $\{u(x_k,t_n)\}$ are discrete in time as well as space, spatio-temporal interpolation can be used to construct $\sdtd(\mathbf{x},t)$ for all $t$. The convergence analysis requires additional care to account for temporal interpolation errors, but the fundamental mechanism remains unchanged.
\end{remark}

\paragraph{Conceptual summary.}
Overall, IDDA utilizes observational information in two complementary ways: 
the additive term $\lambda\sdtd$ enforces direct relaxation toward the observed state, 
while the modified nonlinearity $\mcf[v+\sdtd]$ implicitly corrects the model dynamics. 
This dual mechanism enhances both stability and convergence speed. IDDA is more robust since its nonlinear correction mitigates part of the interpolation error, and achieves faster and more reliable convergence across a range of dissipative PDEs, as confirmed in the numerical experiments presented in Section~\ref{sec:numerical_examples}.

\section{Numerical Examples}
\label{sec:numerical_examples}

We demonstrate IDDA versus interpolated AOT with sparse data across several benchmark PDEs.

\subsection{Numerical Implementation Details}
\label{sec:interp_implementation}

Spatial interpolation of observational data and model fields is performed using standard MATLAB routines, employing piecewise linear or cubic spline interpolation in one dimension and radial basis function (RBF) interpolation in two dimensions, depending on the problem setup.

The governing PDEs are discretized in space using finite differences on uniform grids, with periodic boundary conditions. Time integration is carried out using MATLAB's \texttt{ode45} solver, which implements an explicit adaptive Runge--Kutta (4,5) method. The adaptive time stepping automatically satisfies stability and accuracy requirements for the range of parameters and diffusion coefficients examined in the numerical experiments.

Convergence rates are evaluated from the time evolution of the $L^2$ error 
$E(t) = \|u(\cdot,t) - v(\cdot,t)\|$. 
Assuming exponential decay $E(t) \approx E_0 e^{-\gamma t}$ predicted by Theorem~\ref{thm:IDDA_convergence}, 
the rate $\gamma$ is estimated from the slope of $\log E(t)$ over the interval exhibiting clear exponential behavior. 
Linear regression of $\log E(t)$ versus time yields the fitted rate $\rho \approx \gamma$, 
which is compared directly with the theoretical prediction $\gamma \approx \lambda\alpha$. 
All reported values exclude initial transients and late-time plateaus near machine precision.
\begin{table}[htbp]
\centering
\caption{Summary of numerical experiments demonstrating IDDA performance across representative dissipative PDEs. 
All simulations employ Runge--Kutta (4,5) time integration. 
Convergence plots display $\log(\|d\|)$ versus time, from which the exponential decay rate $\gamma$ is extracted.}
\label{tab:experiments_summary}
\begin{small}
\begin{tabular}{@{}lcccccl@{}}
\toprule
\textbf{Equation} & \textbf{Dim} & $\boldsymbol{N_s}$ & $\boldsymbol{N_x}$ & $\boldsymbol{\lambda}$ & $\boldsymbol{\mu}$ & \textbf{Interpolation} \\
\midrule
Viscous Burgers' & 1D & 3--100 & 1000 & 2 & 0.001 & Linear \\
KPP-Burgers' & 1D & 3 & 1000 & 1--200 & 0.01 & Linear/Cubic spline \\
Kuramoto-Sivashinsky & 1D & 8-80 & 1024 & 2 & N/A & Cubic spline \\
Navier-Stokes & 2D & 100--1000 & $256^2$ & 2 & $10^{-4}$ & $C^2$ RBF \\
\bottomrule
\end{tabular}
\end{small}
\end{table}

Further implementation details for each test equation are provided in the corresponding subsections.

\subsection{Viscous Burgers' Equation}
\label{sec:Burgers}

We consider the viscous Burgers' equation
$$
\pderiv{u}{t} = -u\,u_x + \mu\,u_{xx},
$$
and compare the AOT and IDDA formulations:
\begin{align}
\label{eq:Burgers_AOT_labelled}
&\text{AOT:} \quad \pderiv{v}{t} = -v\,v_x + \mu\,v_{xx} + \lambda\,\sdtd, \\
\label{eq:Burgers_IDDA_labelled}
&\text{IDDA:} \quad \pderiv{v}{t} = -(v+\sdtd)\,v_x + \mu\,v_{xx} + \lambda\,\sdtd.
\end{align}
The computational domain is $x \in [0,1]$ with periodic boundary conditions and diffusion coefficient $\mu = 0.001$.  
The reference initial condition
$$
u(x,0) = 1 + \sin(2\pi x) + \cos^2(4\pi x).
$$
Spatial discretization uses $N_x = 1000$ uniform grid points, and observations are available at 
$N_s = 3$ points $\{0.16, 0.49, 0.82\}$ with linear interpolation ($h \approx 0.33$).  
The assimilated initial condition is $v(x,0) = 0$.

Figure~\ref{fig:Burgers} compares the performance of IDDA and interpolated AOT for the viscous Burgers' equation.  
Panel~(a) shows results using three observation points ($N_s = 3$).  
IDDA rapidly aligns with the reference solution, accurately reproducing both amplitude and phase of the evolving field, while AOT lags behind and exhibits visible phase shifts.  
The $L^2$ error curves confirm exponential convergence: IDDA achieves a rate $\gamma \approx 2.02$, closely matching the target $\lambda = 2$, whereas AOT converges more slowly at $\gamma \approx 0.88$.  

Panel~(b) examines how the convergence rate varies with the number of observation points $N_s$ (from 3 to 100).  
IDDA consistently reaches the target rate $\lambda = 2$ even with very sparse data ($N_s = 3$), demonstrating robustness to limited observations.  
In contrast, AOT improves only gradually with increasing $N_s$ and still fails to achieve the target rate even at $N_s = 100$.  
These results confirm that incorporating the interpolated discrepancy directly within the nonlinear operator enables IDDA to assimilate key flow structures more efficiently than standard nudging.

\begin{figure}[H]
\begin{subfigure}{\textwidth}
\centering
\includegraphics[width=\textwidth]{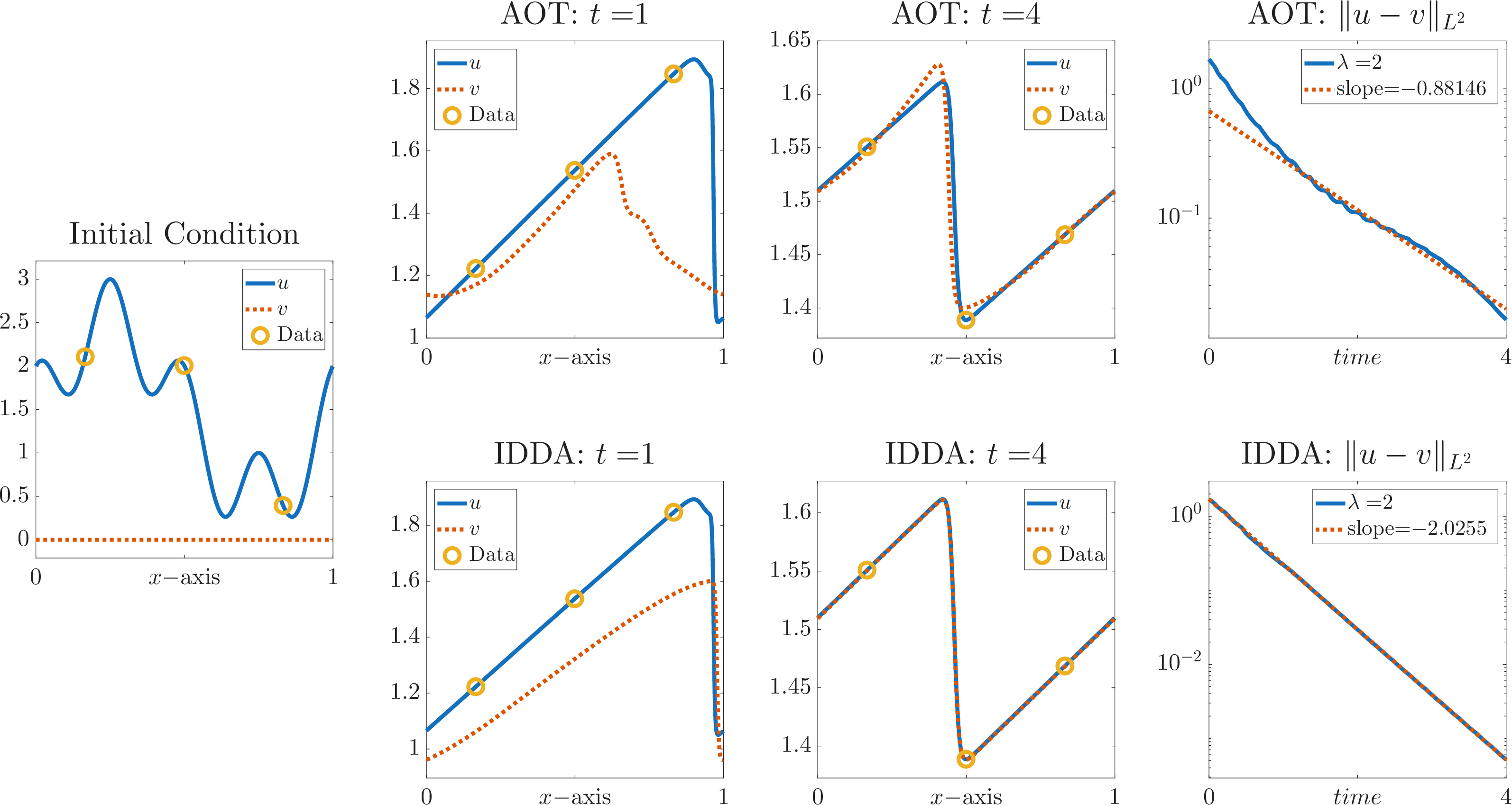}
\caption{Data assimilation for viscous Burgers' equation with $\mu=0.001$, $\lambda=2$, and $N_s=3$ observation points using linear interpolation. The left three panels show spatial profiles at $t=0, 1, 4$ of reference solution $u$ (blue solid line), assimilated solution $v$ (orange dashed line), and observations (orange dots). The right panel displays the time evolution of $L^2$ errors on a semi-logarithmic scale. Interpolated AOT results in a convergence rate $\approx 0.88$, slower than the IDDA $\approx 2.02$, which matches the target $\lambda=2$.}
\label{Fig_Burgers}
\end{subfigure}
\vspace{1em}
\begin{subfigure}{\textwidth}
\centering
\includegraphics[width=0.6\textwidth]{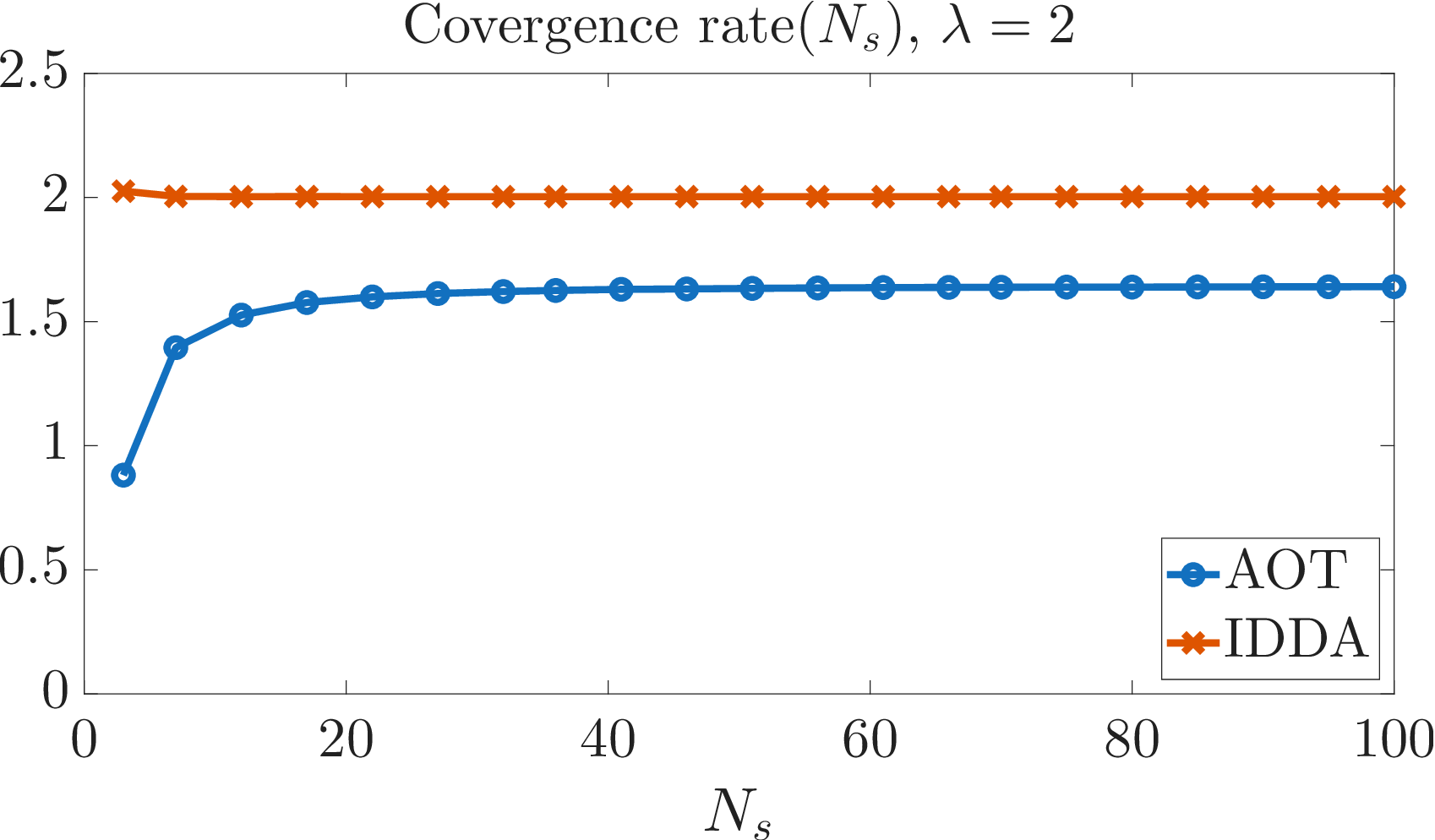}
\caption{Convergence rates versus the number of observed data points $N_s$. IDDA attains the target rate $\lambda=2$ even with few observations $N_s=3$, whereas AOT remains slower and fails to reach it even with $N_s=100$.}
\label{Fig_Burgers_Ns_Comp}
\end{subfigure}
\caption{Comparison for viscous Burgers' equation.}
\label{fig:Burgers}
\end{figure}

\subsection{KPP--Burgers' Equation}
\label{sec:KPP}
We consider the KPP--Burgers' equation
$$
\pderiv{u}{t} = -u\,u_x - 10u(u-1)(u-2) + \mu\,u_{xx},
$$
which combines advection, reaction, and diffusion and exhibits bistable dynamics with stable steady states at $u=0$ and $u=2$. We compare the AOT formulation with two IDDA variants that differ by their interpolation methods:
\begin{small}
\begin{align}
\label{eq:KPP_AOT_labelled}
&\text{AOT:} \quad 
\pderiv{v}{t} = -v\,v_x - 10v(v-1)(v-2) + \mu\,v_{xx} + \lambda\,\sdtd, \\[0.5em]
\label{eq:KPP_IDDA_spline_labelled}
&\text{IDDA--spline:} \quad 
\pderiv{v}{t} = -(v+\sdtd)(v+\sdtd)_x 
- 10(v+\sdtd)\bigl((v+\sdtd)-1\bigr)\bigl((v+\sdtd)-2\bigr)
+ \mu\,v_{xx} + \lambda\,\sdtd, \\[0.5em]
\label{eq:KPP_IDDA_linear_labelled}
&\text{IDDA--linear:} \quad 
\pderiv{v}{t} = -(v+\sdtd)\,v_x 
- 10(v+\sdtd)\bigl((v+\sdtd)-1\bigr)\bigl((v+\sdtd)-2\bigr)
+ \mu\,v_{xx} + \lambda\,\sdtd.
\end{align}
\end{small}

The computational domain is $x \in [0,1]$ with periodic boundary conditions and diffusion coefficient $\mu = 0.01$. 
The reference initial condition is $u(x,0) = 1 + \sin(2\pi x)$, and the assimilated initial condition is $v(x,0) = 0$. 
Spatial discretization uses $N_x = 1000$ uniform grid points, and observations are available at 
$N_s = 3$ points $\{0.16, 0.49, 0.82\}$ with ($h \approx 0.33$). Two interpolation methods are tested: cubic spline interpolation and piecewise linear interpolation for both AOT and IDDA. 

The KPP--Burgers equation provides a challenging test case for data assimilation because the traveling front couples nonlinear advection and reaction terms, producing both steep gradients and bistable dynamics. As shown in Figure ~\ref{Fig_KPP}, IDDA maintains smooth exponential convergence and accurately tracks the front position and amplitude, even with only three observation points. In contrast, the interpolated AOT scheme exhibits irregular error decay and incomplete synchronization of the discontinuity, indicating that its additive nudging term alone cannot fully capture the coupled nonlinear dynamics.

Figure \ref{Fig_lambda_comp} further illustrates how the convergence rate varies with the nudging parameter~$\lambda$. Both AOT and IDDA reach an upper performance limit near $8$, consistent with the physical constraint that assimilation is most effective when the traveling front passes through an observation location. For small $\lambda$, AOT fails to converge and, when $\lambda$ increases, AOT displays non-monotonic behavior due to competition between the nudging term and the intrinsic wave motion. IDDA, by incorporating the interpolated discrepancy into the nonlinear operator, achieves higher convergence for weak nudging and preserves stable behavior across a wide range of~$\lambda$.

\begin{figure}[H]
\begin{subfigure}{\textwidth}
\centering
\includegraphics[width=0.95\textwidth]{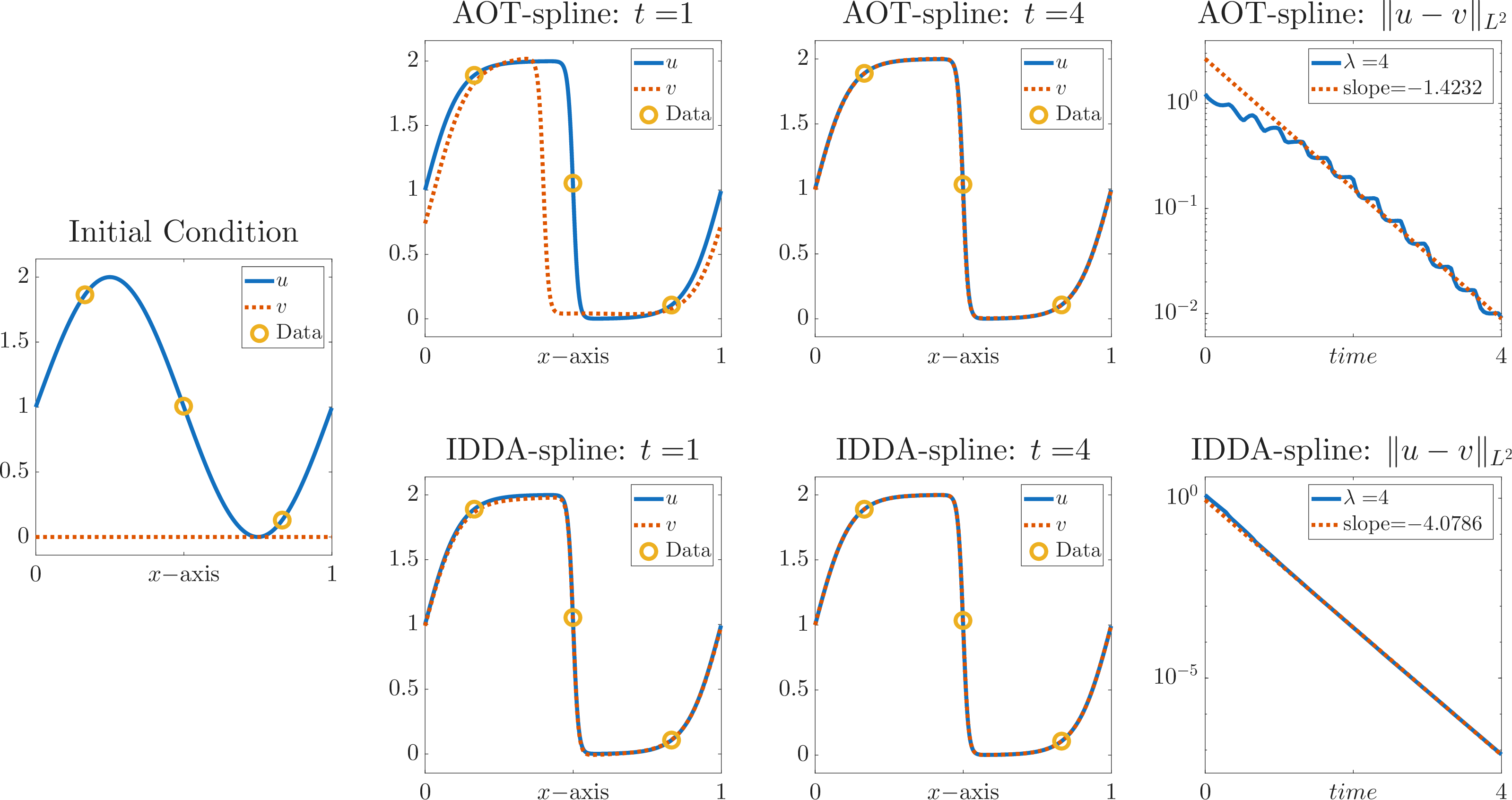}
\caption{
Data assimilation for the KPP--Burgers equation with $\mu=0.01$, $\lambda=4$, and $N_s=3$ using cubic-spline interpolation. 
Interpolated AOT shows non-monotonic convergence with final rate $\approx1.42$ and incomplete synchronization of the discontinuity. 
IDDA achieves smooth exponential convergence at $\approx4.07$, matching the theoretical $\lambda=4$. 
By correcting both advection and reaction terms through $\mcf [v+\tilde d]$, IDDA reduces the phase error in AOT and reliably tracks the coupled nonlinear dynamics.
}
\label{Fig_KPP}
\end{subfigure}

\vspace{1em}

\begin{subfigure}{\textwidth}
\centering
\includegraphics[width=0.95\textwidth]{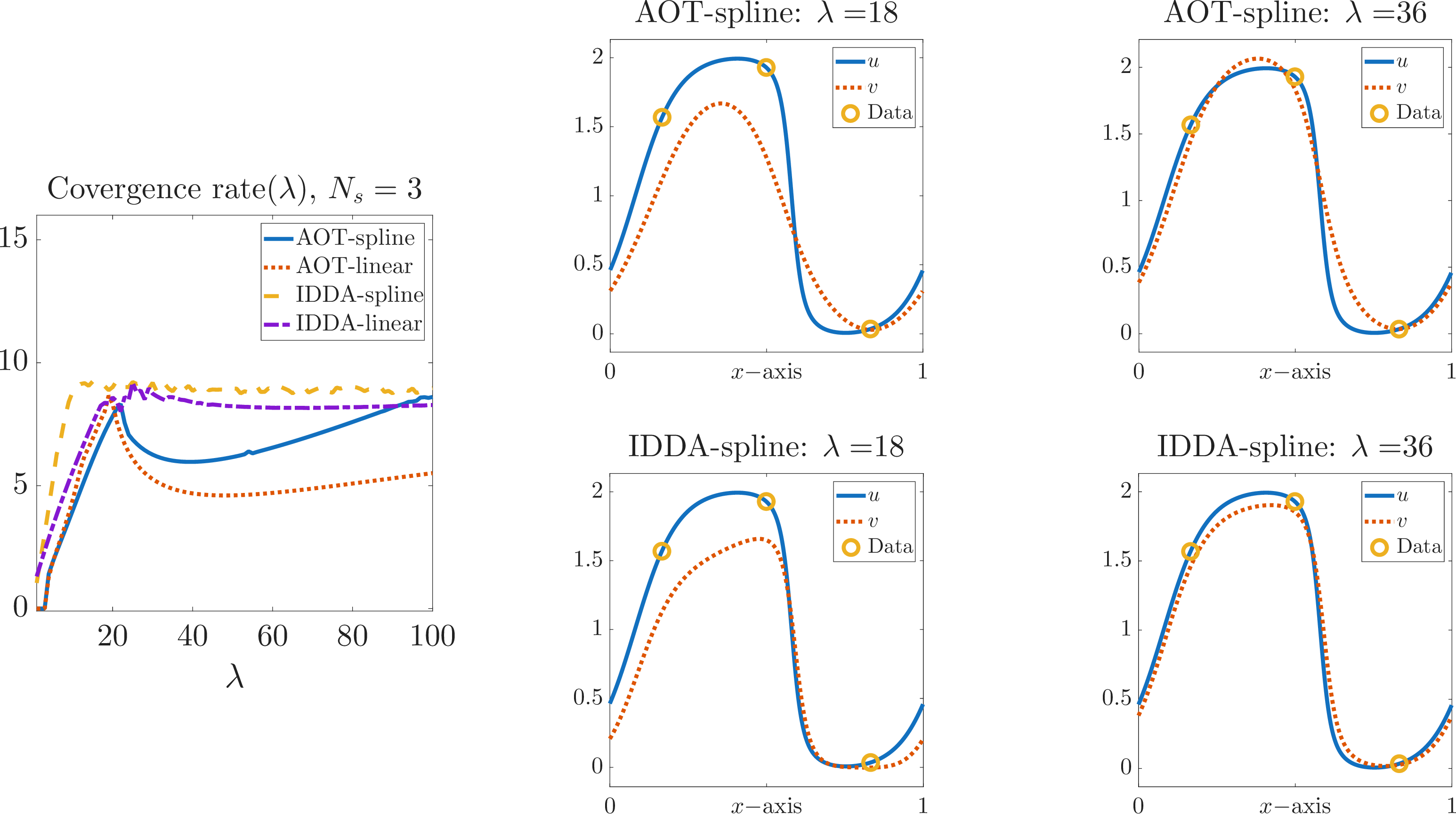}
\caption{
Convergence rate versus nudging parameter $\lambda$ for $N_s=3$. 
Both AOT and IDDA reach an upper bound near~8, reflecting that effective nudging occurs only when the traveling front passes observation points. 
AOT fails for small~$\lambda$ and shows non-monotonic behavior due to interaction between the nudging term and the moving front. 
For $\lambda=18$ and~36, AOT follows the interpolated nudging term, disrupting bistable behavior and missing the true front, 
whereas IDDA maintains higher convergence, correct phase, and amplitude accuracy.
}
\label{Fig_lambda_comp}
\end{subfigure}

\caption{Comparison for the KPP--Burgers equation.}
\label{fig:KPP}
\end{figure}

\subsection{Kuramoto--Sivashinsky (KS) Equation}
\label{sec:KS}

We consider the KS equation
$$
\pderiv{u}{t} = -u\,u_x - 2u_{xx} - u_{xxxx},
$$
which exhibits spatio-temporal chaos with sensitive dependence on initial conditions and positive Lyapunov exponents. Small discrepancies in the initial or observed data can grow exponentially, making data assimilation particularly challenging. 

In this problem, the second-order term $-2u_{xx}$ acts as an anti-diffusion that destabilizes long wavelengths and initiates chaotic motion, while the fourth-order term $-u_{xxxx}$ provides short-wavelength stabilization. Within the IDDA framework, we classify $-uu_x-2u_{xx}$ as the non-diffusive operator $\mcf$ and $-u_{xxxx}$ as the dissipative operator $\mcd$. Together with cubic spline interpolation, the corresponding formulations are
\begin{align}
\label{eq:KS_AOT_labelled}
&\text{AOT:} \quad 
\pderiv{v}{t} = -v\,v_x - 2v_{xx} - v_{xxxx} + \lambda\,\sdtd, \\[0.5em]
\label{eq:KS_IDDA_labelled}
&\text{IDDA:} \quad 
\pderiv{v}{t} = -(v+\sdtd)(v+\sdtd)_x - 2(v+\sdtd)_{xx} - v_{xxxx} + \lambda\,\sdtd.
\end{align}

The computational domain is $x \in [0,32\pi]$ with periodic boundary conditions. 
The reference initial condition is $u(x,0)=\cos(x/16)(1+\sin(x/16))$, and the assimilated initial condition is $v(x,0)=0$. 
Spatial discretization uses $N_x = 1024$ uniform grid points, and observations are available at 
$N_s = 64$ uniformly distributed points with cubic spline interpolation. 
The nudging parameter is $\lambda = 2$, and diffusion coefficients are as given in the governing equation. 

The KS equation provides a severe test for data assimilation because its chaotic attractor rapidly amplifies small-scale perturbations. 
As shown in Figure~\ref{Fig:KS_comp}, IDDA achieves smooth exponential convergence with a rate $\approx 2 =\lambda$, closely matching the theoretical prediction and maintaining consistent synchronization despite chaotic forcing. 
In contrast, the interpolated AOT scheme exhibits slower convergence ($\approx 1.26$).

The KS equation represents a strongly chaotic system in which small perturbations grow rapidly due to anti-diffusive instabilities. As shown in Figure~\ref{Fig_KS}, the interpolated AOT scheme achieves a convergence rate of approximately $1.26$, significantly below the target $\lambda = 2$. In contrast, IDDA maintains stable synchronization and achieves a convergence rate of about~$2$, matching the theoretical prediction. The inclusion of $\sdtd$ within the nonlinear term $\mcf[v+\sdtd]$ allows IDDA to directly correct anti-diffusive errors and preserve the phase of chaotic fluctuations.

Figure~\ref{Fig_ns_comp} examines the dependence on the number of observation points $N_s$. When $N_s$ is small, both AOT and IDDA fail to converge, as the unresolved small scales lead to persistent phase errors and $\tilde{d}$ deviates significantly from the true discrepancy $d$. Once the number of observations exceeds twice the number of active unstable modes ($N_s \gtrsim 44$), both methods recover exponential convergence, and IDDA reaches the target rate $\lambda$ more sharply. For $N_s = 48$, the discrepancy and its interpolant nearly machine error, with IDDA showing smaller residual oscillations. These results confirm that IDDA retains accuracy and stability even in chaotic regimes, provided that the observation density is sufficient to capture the dynamically relevant modes.

\begin{figure}[H]
\begin{subfigure}{\textwidth}
\centering
\includegraphics[width=0.95\textwidth]{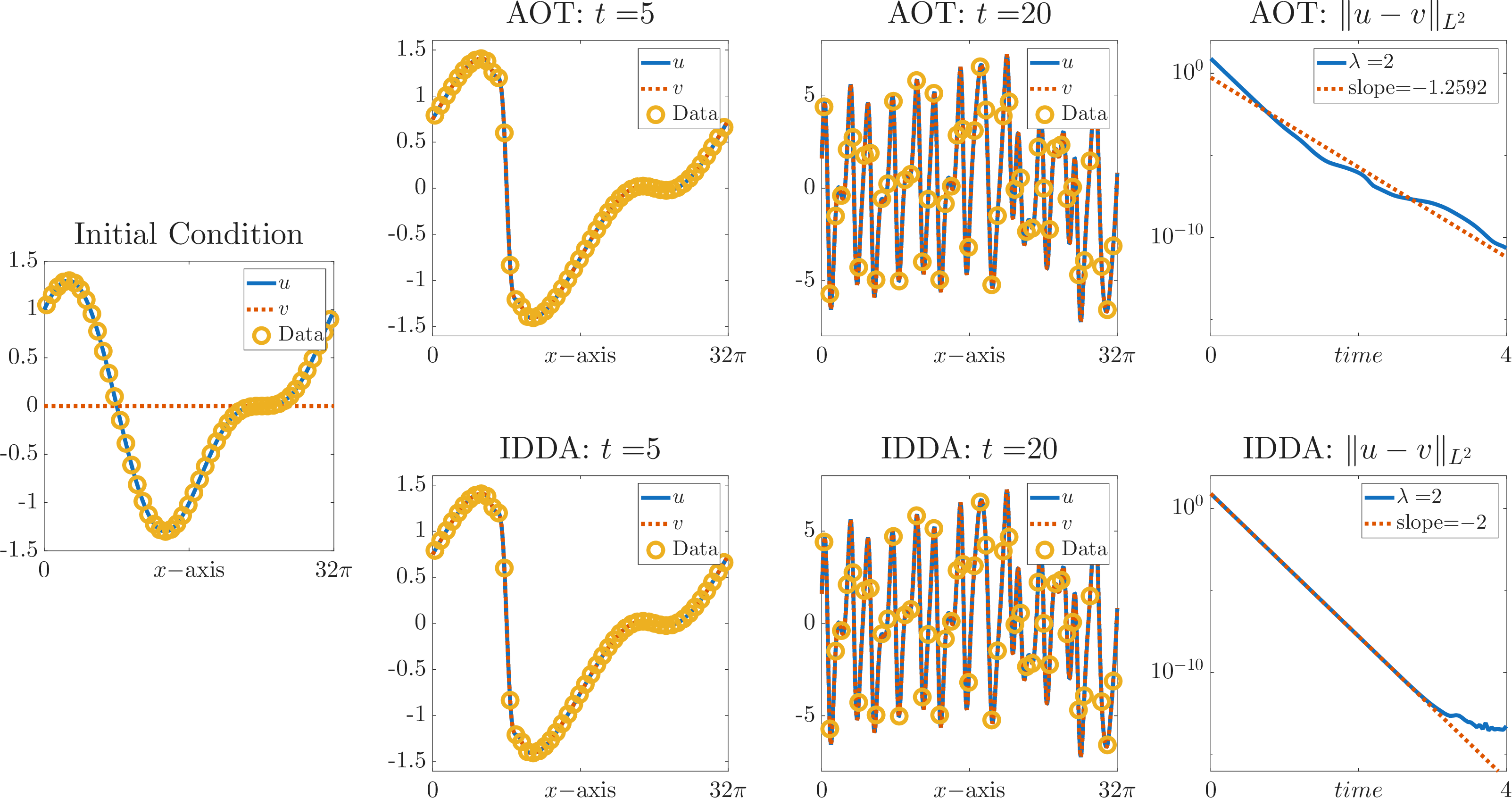}
\caption{Interpolated AOT achieves rate $\approx 1.13$, substantially below target $\lambda=2$, indicating that additive nudging alone cannot effectively counteract anti-diffusive instabilities and chaotic dynamics. IDDA achieves rate $\approx 2$, matches the target $\lambda=2$.}
\label{Fig_KS}
\end{subfigure}
\vspace{1em}
\begin{subfigure}{\textwidth}
\centering
\includegraphics[width=0.95\textwidth]{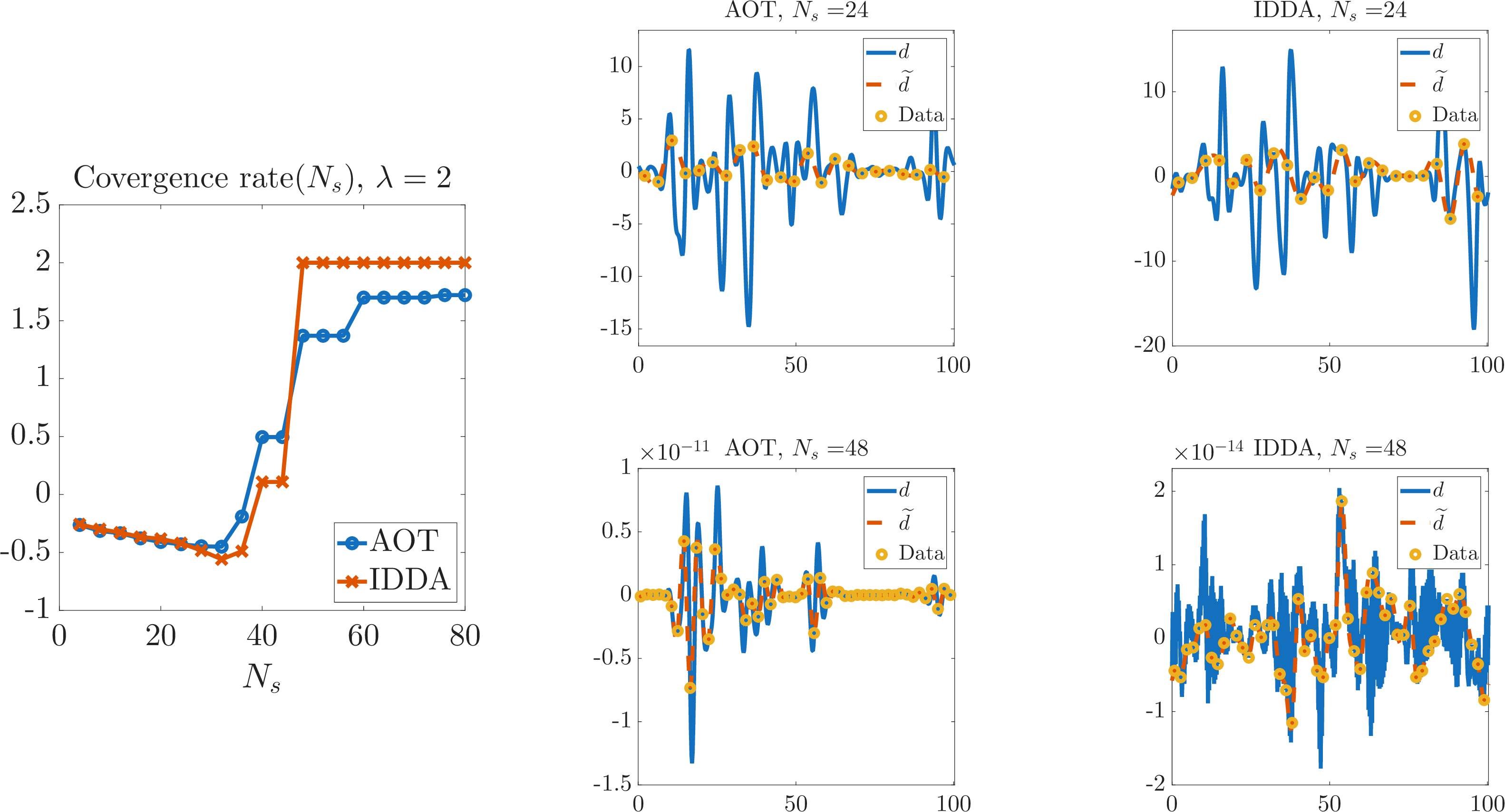}
\caption{Convergence behavior with respect to the number of observation points $N_s$ 
for the 1D Kuramoto--Sivashinsky-type equation.
The left panel shows the convergence rate versus $N_s$ for AOT and IDDA with $\lambda = 2$. 
Both methods fail when the number of observations is insufficient to resolve all active modes, resulting in negative or near-zero rates. Once $N_s$ exceeds twice the number of active modes, both schemes recover exponential convergence, 
with IDDA achieving the target rate $\lambda$ more sharply. The right panels illustrate the discrepancy $d = u - v$ and its interpolated counterpart $\tilde{d}$ for representative cases. 
For $N_s =24 $, neither AOT nor IDDA converges, as $d - \tilde{d} = O(1)$. For $N_s = 48$, both methods yield nearly identical reconstructions ($d \approx \tilde{d}$), with IDDA exhibiting smaller residual oscillations. 
}
\label{Fig_ns_comp}
\end{subfigure}
\caption{Comparison for KS equation.}
\label{Fig:KS_comp}
\end{figure}

\subsection{Two-Dimensional Navier--Stokes Equation}
\label{sec:NS}

We consider the two dimensional incompressible Navier--Stokes equations in vorticity--streamfunction formulation,
$$
\pderiv{\omega}{t} = -\mathbf{u}\cdot\nabla\omega + \mu\nabla^2\omega, 
\quad \nabla^2\Psi = -\omega, 
\quad \mathbf{u} = \left(\pderiv{\Psi}{y}, -\pderiv{\Psi}{x}\right),
$$
where $\omega$ is the vorticity, $\Psi$ is the streamfunction, and $\mathbf{u}$ is the velocity field.  
The reference initial vorticity field is defined as
$$\begin{aligned}
 \omega(x,y,0) =
50\,&e^{-\tfrac{(x-\tfrac{5\pi}{4})^2+(y-\pi)^2}{0.4}}
-50\,e^{-\tfrac{(x-\tfrac{3\pi}{4})^2+(y-\pi)^2}{0.8}}\\
&+50\,e^{-\tfrac{(x-\pi)^2+(y-\tfrac{3\pi}{2})^2}{0.4}}
-50\,e^{-\tfrac{(x-\pi)^2+(y-\tfrac{\pi}{2})^2}{0.8}}.   
\end{aligned}
$$
The initial condition consists of superpositions of Gaussian vortices, producing complex vortical structures characteristic of two-dimensional turbulence.  
This configuration serves as a stringent test for assessing IDDA's performance in tracking multi-scale, nonlinear flow dynamics such as vortex merging and the inverse energy cascade.

Within the IDDA framework, the nonlinear advection term $-\mathbf{u}\cdot\nabla\omega$ is treated as the non-diffusive operator $\mcf$, while the Laplacian term $\mu\Delta\omega$ represents the dissipative operator $\mcd$.  
To stabilize the interpolated discrepancy under weak physical diffusion ($\mu = 10^{-4}$), we add an artificial diffusion term $\eta\Delta\tilde{d}$ with $\eta = kh$, $k\in[0,2]$, which acts only on the interpolated discrepancy without over-smoothing the assimilated field.
The AOT and IDDA formulations are therefore
\begin{align}
\label{eq:NS_AOT_labelled}
&\text{AOT:}\left\{
\begin{aligned}
  &\pderiv{\zeta}{t} = -\mathbf{u}_\zeta \cdot \nabla \zeta + \mu\laplace\zeta + \lambda \sdtd,\\
  &\laplace\Psi_\zeta = -\zeta, \quad \mathbf{u}_\zeta = (\pderiv{\Psi_\zeta}{y}, -\pderiv{\Psi_\zeta}{x}),
\end{aligned}
\right.\\
\label{eq:NS_IDDA_labelled}
&\text{IDDA:}\left\{
\begin{aligned}
  &\pderiv{\zeta}{t} = -\mathbf{u}_{\zeta+\sdtd} \cdot \nabla (\zeta+\sdtd) + \mu\laplace\zeta + \lambda \sdtd - \eta\laplace\sdtd,\\
  &\laplace\Psi_{\zeta+\sdtd} = -(\zeta+\sdtd), \quad \mathbf{u}_{\zeta+\sdtd} = (\pderiv{\Psi_{\zeta+\sdtd}}{y}, -\pderiv{\Psi_{\zeta+\sdtd}}{x}).
\end{aligned}
\right.
\end{align}

The computational domain is $[0,2\pi]^2$ with periodic boundary conditions.  
Spatial discretization uses $256\times256$ grid points and pseudo-spectral differentiation.  
Observations are sampled at $N_s$ quasi-random locations generated using Halton sequences to ensure uniform spatial coverage.  
Interpolation of observational data is performed using a $C^2$-smooth radial basis function (RBF) interpolant with support radius $r=\rho h$, $\rho\in[1,10]$, where $h$ represents the characteristic spacing between observation points ($\approx2\pi/\sqrt{N_s}$). This choice ensures globally smooth reconstruction while preserving locality and stability of the interpolated discrepancy. The artificial diffusion parameter is $\eta=kh$, $k\in[0,2]$. The nudging parameter is set to $\lambda = 2$, and the assimilated initial vorticity field is $\zeta(x,y,0) = 0$.

Figure \ref{fig:ExNS_comp} summarizes the results of the two-dimensional Navier–Stokes assimilation test using $N_s = 400$ quasi-random observations reconstructed by a compactly supported RBF interpolant with radius $\rho = 5h$ and artificial diffusion $\eta = h$.  Panel (a) compares vorticity fields at $t = 2$ and $t = 6$, showing that the IDDA solution rapidly synchronizes with the reference flow, accurately reproducing the position and strength of the dominant vortices, whereas AOT exhibits visible phase and amplitude errors.  Panel (b) presents the corresponding spatial error distributions and time evolution of the $L^2$ norm of the vorticity difference.  IDDA maintains smaller, more homogeneous errors throughout the domain and achieves exponential convergence with rate $\gamma \approx 2.1 \approx \lambda$, in close agreement with the theoretical prediction, while AOT converges more slowly with $\gamma \approx 0.83$.  These results confirm that incorporating the interpolated discrepancy directly into the nonlinear advection term substantially improves both accuracy and convergence speed, even under sparse observational coverage.

Figure~\ref{ExNS:comp_5} summarizes the parametric convergence behavior of IDDA and AOT for the two-dimensional Navier–Stokes equations.
The left panel demonstrates that IDDA reaches the target convergence rate $\gamma \approx 2\lambda$ once the number of observation points exceeds $N_s \approx 400$,
whereas AOT converges more slowly with the same observation density.
The middle panel shows that IDDA’s performance remains robust across a broad range of RBF support radii ($\rho \ge 2$),
indicating that the method is not overly sensitive to interpolation width.
The right panel highlights the stabilizing effect of artificial diffusion $\eta = kh$, where IDDA maintains near-optimal convergence even for small~$\eta$,
while AOT requires significantly larger diffusion to achieve comparable convergence rates.
Together, these results confirm that IDDA delivers rapid and reliable convergence over a wide range of spatial and numerical parameters.

\begin{figure}[htbp]
\begin{subfigure}{\textwidth}
\centering
\includegraphics[trim= 7.2cm 2.5cm 6cm 1.5cm,clip=true,width=0.95\textwidth]{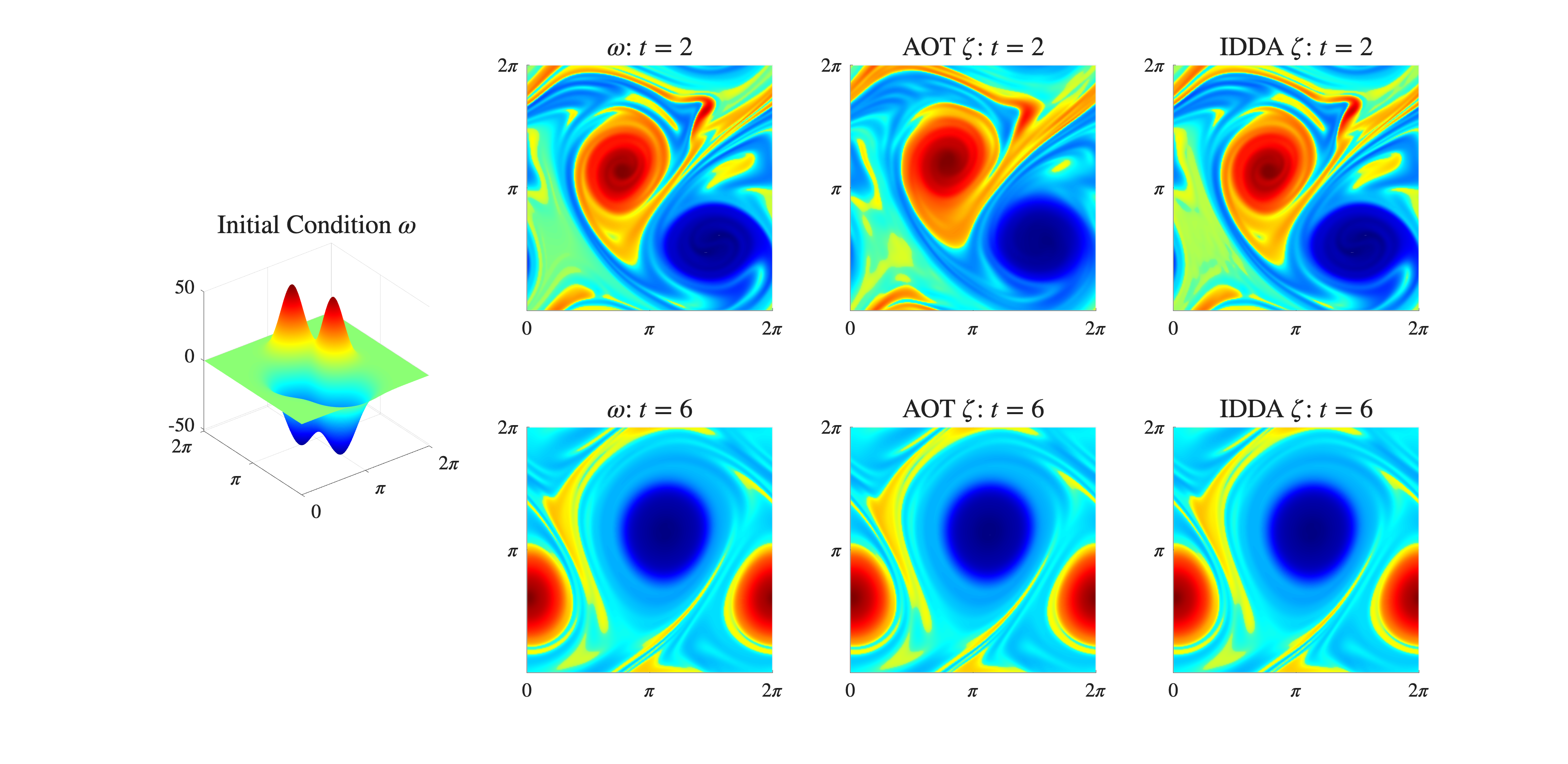}
\caption{Data assimilation for the two-dimensional Navier--Stokes equations in vorticity form. 
Comparison of AOT and IDDA reconstructions at $t = 2$ and $t = 6$ using $N_s = 400$ quasi-random observations, 
RBF interpolation with radius $\rho = 5h$, and artificial diffusion $\eta = h$. 
IDDA rapidly aligns with the reference vorticity field, accurately capturing vortex evolution and merger, 
whereas AOT exhibits noticeable phase and amplitude discrepancies.}
\label{ExNS:comp_1}
\end{subfigure}

\vspace{1em}

\begin{subfigure}{\textwidth}
\centering
\includegraphics[trim= 5.5cm 7cm 6cm 6cm,clip=true,width=0.95\textwidth]{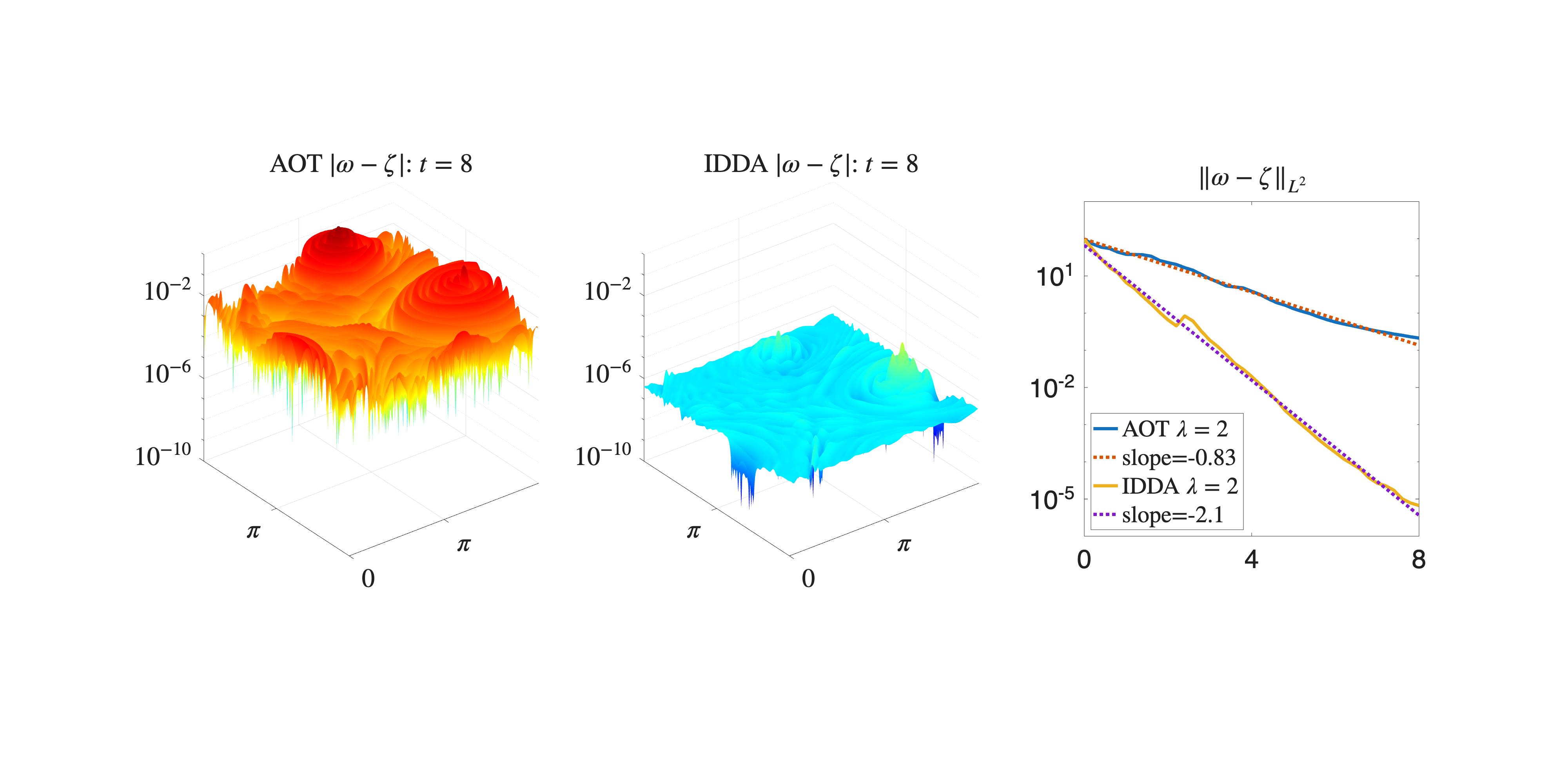}
\caption{Pointwise vorticity errors $|\omega - \zeta|$ at $t = 8$ (left) and time evolution of the $L^2$ error norm (right). 
Spatial error maps show that IDDA maintains smaller, more uniformly distributed errors across the domain, 
while AOT produces larger localized errors near vortex boundaries. 
The time series confirms exponential convergence, with IDDA achieving rate $\approx 2.1$ (matching the target $\lambda$) 
compared to AOT's slower rate $\approx 0.83$.}
\label{ExNS:comp_2}
\end{subfigure}
\caption{Performance comparison of IDDA and AOT for the two-dimensional Navier--Stokes equations with $N_s = 400$ observations, $\lambda = 2$, and artificial diffusion $\eta = h$. 
Panel~(a) shows vorticity field reconstructions demonstrating IDDA’s superior alignment with the reference solution. 
Panel~(b) quantifies this improvement through spatial error distributions and convergence rates.
}
\label{fig:ExNS_comp}
\end{figure}

\begin{figure}[htbp]
\centering
\includegraphics[trim= 0cm 0cm 0cm 0cm,clip=true,width=0.95\textwidth]{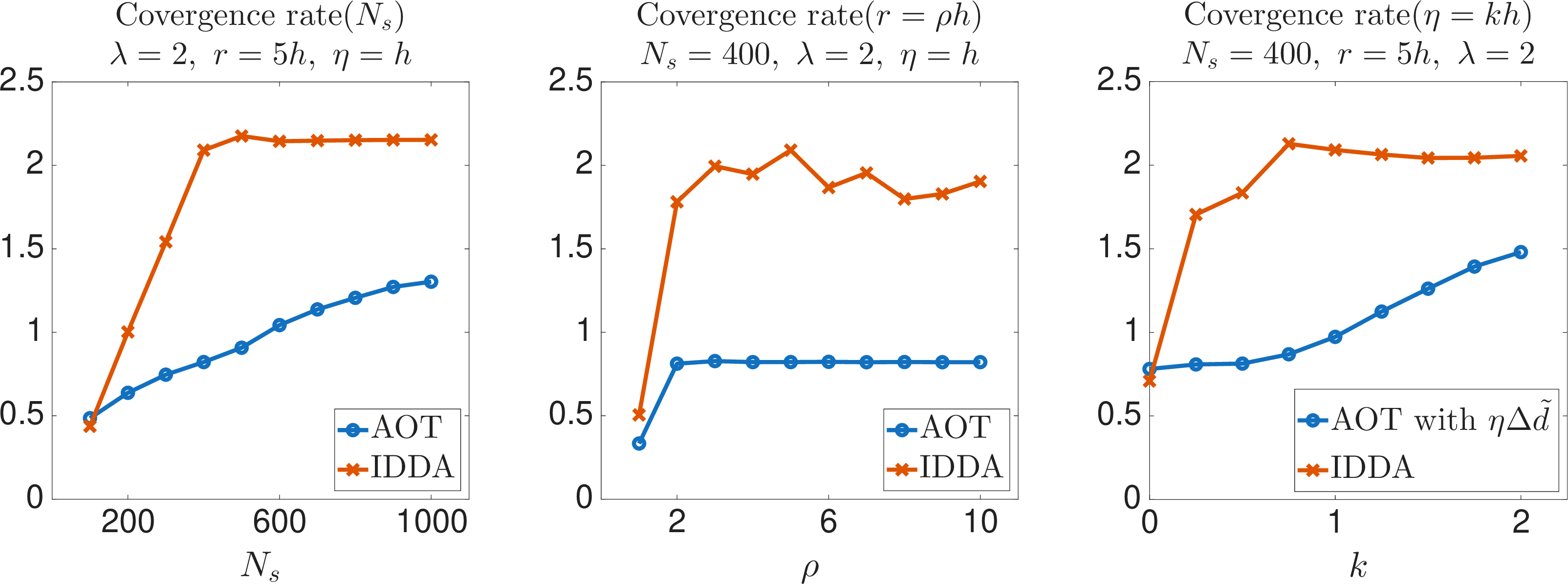}
\caption{Convergence behavior for the two-dimensional Navier--Stokes equations with varying observation density, interpolation radius, and artificial diffusion. 
The left panel shows convergence rates versus the number of observation points $N_s$ (ranging from 100 to 1000) for both AOT and IDDA with $\lambda = 2$, $\rho = 5h$, and $\eta = h$. 
IDDA achieves rates close to the theoretical target when $N_s \ge 400$. 
The middle panel shows the dependence of convergence rate on the RBF support radius $r=\rho h$ (ranging from $h$ to $10h$); IDDA consistently reaches the target rate for $\rho \ge 2$. 
The right panel examines the effect of the artificial diffusion coefficient $\eta = kh$  (ranging from $0$ to $2h$) on convergence, where AOT is augmented with matching artificial diffusion for a fair comparison. 
Across all tested parameters, IDDA maintains faster and stable convergence than AOT.
}
\label{ExNS:comp_5}
\end{figure}

\section{Limitations and Future Directions}
\label{sec:limitations}

While IDDA demonstrates clear improvements over interpolated AOT in both theoretical analysis and numerical performance, several limitations and open problems remain that merit further study.

\subsection{Theoretical Limitations}

The convergence conditions established in Theorem~\ref{thm:IDDA_convergence} are sufficient but not necessarily sharp. 
The interpolation coercivity parameter $\alpha$ in Lemma~\ref{lem:interp_coercivity} provides a conservative lower bound; numerical evidence indicates that effective values can approach one for smooth solutions and accurate interpolation. 
A refined analysis connecting $\alpha$ to solution regularity, interpolation order, and grid spacing could yield sharper and more practical bounds on the required nudging strength $\lambda$.

The current proof framework assumes solutions with bounded spatial gradients. 
For PDEs exhibiting discontinuities or steep gradients (e.g., Burgers' shocks), the interpolation error bound $\|f - \tilde{f}\| \le Ch\|\nabla f\|$ no longer holds. 
Extending IDDA to handle limited-regularity solutions may require adaptive observations, locally refined interpolation, or shock-aware reconstruction schemes.

\subsection{Practical Considerations}

All tests used uniformly spaced or quasi uniform observation networks. Real systems often have highly irregular spatial sampling. Investigating the effect of observation clustering, adaptive density, and anisotropic coverage on IDDA performance would improve its practical applicability.

Selecting appropriate assimilation parameters is essential for stable and efficient IDDA implementation. 
Numerical experiments indicate that moderate nudging stren\-gths ($\lambda = 1$--$4$) generally yield reliable convergence, 
though optimal values may vary in time or space depending on local dynamics. 
The artificial diffusion parameter $\eta$ (Remark~\ref{rem:art_diss}) likewise requires empirical tuning. 
Recent work by \v{C}ib\'{\i}k et al.~\cite{cibik2025adaptive} introduced adaptive nudging algorithms that dynamically adjust $\lambda$ 
based on observed residuals, achieving effective values orders of magnitude smaller than theoretical worst-case estimates. 
Such residual-based strategies could be integrated into IDDA by monitoring the mismatch between model predictions and interpolated observations, 
allowing $\lambda$ to evolve automatically in time to maintain target convergence rates. 
An alternative framework proposed by Diegel et al.~\cite{diegel2025analysis} employs large nudging parameters with implicit time-stepping to achieve 
optimal accuracy independent of $\lambda$, at the expense of higher computational cost for nonlinear solvers. 
IDDA's robustness with moderate $\lambda$ and explicit schemes offers a complementary approach, balancing efficiency and simplicity while 
remaining compatible with adaptive parameter selection techniques.

In practical applications, observational and modeling imperfections present significant challenges to data assimilation. 
Real measurements inevitably contain noise, with observed data of the form 
$u_{\text{obs}}(x_k,t) = u(x_k,t) + \epsilon_k(t)$, where $\epsilon_k$ represents measurement error. 
Such noise propagates through interpolation as $\tilde{u}_{\text{noisy}} = \tilde{u} + \tilde{\epsilon}$, 
affecting both the nudging term $\lambda\tilde{d}$ and the nonlinear operator $\mcf[v+\tilde{d}]$. 
Preliminary analysis suggests that bounded measurement noise $\|\epsilon_k\|\le\sigma$ leads to a bounded steady-state error rather than exponential convergence to zero. 
Developing noise-robust variants of IDDA is therefore an important future direction.

Model inaccuracies introduce an additional source of error. 
When the true state satisfies $\partial_t u = \mcg[u] + \delta(x,t)$ for some model discrepancy $\delta$, 
the IDDA error evolves as
$$
\partial_t d = \mcd[d] + (\mcf[u] - \mcf[v+\tilde{d}]) - \lambda\tilde{d} + \delta.
$$
Persistent model errors $\delta\neq0$ prevent exact synchronization, leading to nonzero asymptotic bias. 
Addressing this limitation may require adaptive or time-dependent nudging parameters $\lambda(t)$ that respond to observed model--data mismatches, 
or the incorporation of statistical error models to compensate for structural discrepancies in $\mcg$.

Finally, many real systems are multiscale or coupled, involving partial observability, scale separation, or stiffness. 
Examples include atmospheric and oceanic models where only subsets of variables are observed, 
coarse-grained satellite measurements coupled with high-resolution simulations, or reacting flows combining fast and slow processes. 
Extending IDDA to these settings will require careful treatment of the operator decomposition $\mcg = \mcf + \mcd$, 
potentially integrating multiscale interpolation and implicit or semi-implicit time-stepping strategies.

\subsection{Extensions and Future Research}

Future work can extend IDDA in several directions. Incorporating filtering or regularization techniques would improve robustness under noisy or uncertain observations, where measurement errors propagate through interpolation. Adaptive or stochastic nudging parameters could help mitigate model error and imperfect dynamics when the underlying PDE model is approximate. Automated tuning of $\lambda$ and artificial diffusion $\eta$, following residual-based adaptive strategies~\cite{cibik2025adaptive}, could make IDDA self-regulating without manual calibration. Extending the method to coupled or multiscale systems, such as reacting flows or geophysical models, will require coupling-aware interpolation and possibly implicit or semi-implicit schemes. Finally, testing IDDA in large-scale forecasting or ocean circulation models will assess its performance under realistic, sparse, and noisy observation networks.

\section{Summary and Conclusions}
\label{sec:conclusion}

We introduced the Interpolated Discrepancy Data Assimilation (IDDA) method, a modification of continuous data assimilation designed for sparse physical-space observations. By incorporating the interpolated discrepancy both as a nudging term and within the nonlinear operator, IDDA aligns model dynamics directly with observed behavior, achieving robust and predictable convergence without requiring large nudging parameters. Theoretical analysis established exponential error decay under explicit conditions on observation density, diffusion, and feedback strength, while numerical experiments across viscous Burgers', KPP--Burgers', Kuramoto--Sivashinsky, and Navier--Stokes equations confirmed consistent agreement with theoretical predictions and superior performance compared with interpolated AOT. Future work will focus on extending IDDA to handle noisy observations, model error, coupled or multiscale systems, and adaptive parameter selection, further bridging the gap between mathematical theory and operational data assimilation practice.

\section*{Acknowledgements}

The authors gratefully acknowledge support from Laboratory Directed Research and Development program at Los Alamos National Laboratory under project number 20260103ER. We thank Adam Larios and Elizabeth Carlson for valuable discussions on continuous data assimilation methods and convergence analysis. We are particularly grateful to Edriss Titi for insightful comments on interpolation-based data assimilation. Claude (Anthropic AI assistant) was used to improve the organization of the manuscript, enhance the clarity of the narrative, and improve the quality of the prose. All scientific content, analyses, simulations, and interpretations remain the sole responsibility of the authors.

\section*{Disclosure Statement}

The authors declare no conflict of interest.

\textbf{Author Contributions:} T.W. developed the IDDA method, performed the rigorous convergence analysis (Theorem~\ref{thm:IDDA_convergence} and Lemma~\ref{lem:interp_coercivity}), and conducted numerical experiments. H.G. contributed to algorithm design and provided oceanographic application context. V.G. assisted with mathematical analysis and computational implementation. J.M.H. supervised the research and provided guidance on data assimilation theory. All authors contributed to manuscript preparation.

\appendix

\section{Convergence Analysis for Interpolated AOT}
\label{sec:AOT_appendix}

For completeness and to facilitate direct comparison with IDDA (Theorem~\ref{thm:IDDA_convergence}), we provide a rigorous convergence proof for the interpolated Azouani-Olson-Titi approach under identical assumptions. This analysis parallels the IDDA proof but reveals the fundamental difference: AOT's convergence condition depends explicitly on the Lipschitz constant $L$, whereas IDDA's depends on the interpolation error scale $h^2$.

\begin{theorem}[AOT Exponential Convergence]\label{thm:AOT_convergence}
Let $\Omega\subset\mathbb{R}^d$ be a bounded domain with periodic or homogeneous Dirichlet boundary conditions, and consider the dissipative PDE
$$
u_t = \mcf[u]+\mcd[u],
$$
where $u(\mathbf{x},t)$ is the reference solution. Assume that Assumptions~\ref{assump:diffusion}--\ref{assump:observations} hold. Let $v(\mathbf{x},t)$ be the assimilated solution satisfying the interpolated AOT dynamics
\begin{equation}\label{eq:AOT_dynamics_thm}
v_t = \mcf[v] + \mcd[v] + \lambda\,\sdtd, 
\qquad d := u-v.
\end{equation}
If the nudging parameter $\lambda$ and observation spacing $h$ satisfy
\begin{equation}\label{eq:AOT_cond_thm}
\frac{L}{\alpha} < \lambda < \frac{2\mu}{C^2h^2},
\end{equation}
where we use the coercivity constant $\alpha = 1/2$ from Lemma~\ref{lem:interp_coercivity} (giving the factor of 2), then the $L^2$ error norm decays exponentially:
\begin{equation}\label{eq:AOT_decay_thm}
    \|d(t)\| \le e^{-\gamma_{\text{AOT}} t}\,\|d(0)\|,
    \qquad
    \gamma_{\text{AOT}} = \lambda\alpha - L > 0.
\end{equation}
\end{theorem}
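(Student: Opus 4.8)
The plan is to run the same energy--dissipation argument used for Theorem~\ref{thm:IDDA_convergence}, exploiting the fact that the AOT dynamics retain the unmodified operator $\mcf[v]$. First I would subtract the reference equation $u_t = \mcf[u]+\mcd[u]$ from the AOT system \eqref{eq:AOT_dynamics_thm} to obtain the error evolution $\partial_t d = \mcd[d] + (\mcf[u]-\mcf[v]) - \lambda\,\sdtd$, and then pair this with $d$ in the $L^2$ inner product to produce the energy identity $\tfrac12\tfrac{d}{dt}\|d\|^2 = \langle\mcd[d],d\rangle + \langle\mcf[u]-\mcf[v],d\rangle - \lambda\langle\sdtd,d\rangle$.

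The three terms on the right are then bounded in turn. The dissipation term gives $\langle\mcd[d],d\rangle \le -\mu\|\nabla d\|^2$ by Assumption~\ref{assump:diffusion}. The crucial structural simplification relative to IDDA is that the Lipschitz bound (Assumption~\ref{assump:lipschitz}) applies directly to $u-v = d$ rather than to $d-\sdtd$, so $\|\mcf[u]-\mcf[v]\| \le L\|d\|$ and Cauchy--Schwarz yields the clean estimate $\langle\mcf[u]-\mcf[v],d\rangle \le L\|d\|^2$. Because this is a pure $\|d\|^2$ term with no gradient factor, no Young's inequality is needed here, unlike in the IDDA proof. Finally, the nudging term is handled by Lemma~\ref{lem:interp_coercivity} with $\alpha=1/2$, producing $-\lambda\langle\sdtd,d\rangle \le -\lambda\alpha\|d\|^2 + \tfrac12\lambda C^2h^2\|\nabla d\|^2$.

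Assembling these bounds and grouping the $\|d\|^2$ and $\|\nabla d\|^2$ contributions gives
\begin{equation*}
\tfrac12\tfrac{d}{dt}\|d\|^2 \le -(\lambda\alpha - L)\|d\|^2 - \tfrac12(2\mu - \lambda C^2h^2)\|\nabla d\|^2.
\end{equation*}
Under condition \eqref{eq:AOT_cond_thm}, the lower bound $\lambda > L/\alpha$ forces $\lambda\alpha - L > 0$ and the upper bound $\lambda < 2\mu/(C^2h^2)$ forces $2\mu - \lambda C^2h^2 > 0$, so both coefficients are strictly positive. Discarding the non-positive gradient term and invoking Gr\"onwall's inequality then delivers $\|d(t)\|^2 \le e^{-2\gamma_{\text{AOT}}t}\|d(0)\|^2$ with $\gamma_{\text{AOT}} = \lambda\alpha - L$, and taking square roots gives \eqref{eq:AOT_decay_thm}.

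I do not anticipate a genuine technical obstacle, since this argument is strictly simpler than the IDDA proof: the absence of the interpolated discrepancy inside $\mcf$ eliminates the mixed gradient term and with it the need for Young's inequality. The only point requiring care is verifying that the admissible window for $\lambda$ in \eqref{eq:AOT_cond_thm} is nonempty, i.e.\ $L/\alpha < 2\mu/(C^2h^2)$, which holds precisely in the regime of sufficiently small $h$. The conceptually important feature, to be emphasized in the comparison of Section~\ref{sec:comparison}, is that the resulting lower bound on $\lambda$ scales like $O(L)$, whereas the structural modification in IDDA replaces it by the $O(h^2)$ scaling of \eqref{eq:IDDA_cond}.
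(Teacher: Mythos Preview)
Your proposal is correct and follows essentially the same approach as the paper's proof: derive the error evolution, take the $L^2$ energy identity, bound the three terms via Assumption~\ref{assump:diffusion}, the Lipschitz estimate $\langle\mcf[u]-\mcf[v],d\rangle \le L\|d\|^2$, and Lemma~\ref{lem:interp_coercivity}, then close with Gr\"onwall. Your remark that no Young's inequality is needed because the nonlinear term yields a pure $\|d\|^2$ bound is exactly the structural contrast the paper highlights.
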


\begin{proof}[Proof of Theorem~\ref{thm:AOT_convergence}]
Subtracting the true equation from the AOT dynamics \eqref{eq:AOT_dynamics_thm} yields:
\begin{equation}\label{eq:AOT_error_evolution}
\partial_t d = \mcd[d] + (\mcf[u] - \mcf[v]) - \lambda\,\sdtd.
\end{equation}

Taking the $L^2$ inner product with $d$:
\begin{equation}\label{eq:AOT_energy}
\frac{1}{2}\frac{d}{dt}\|d\|^2
= \langle \mcd[d],d\rangle + \langle \mcf[u]-\mcf[v], d\rangle - \lambda \langle \sdtd, d\rangle.
\end{equation}

By Assumption~\ref{assump:diffusion},
\begin{equation}\label{eq:AOT_dissipative}
\langle \mcd[d], d\rangle \le -\mu\|\nabla d\|^2.
\end{equation}

This is the key difference from IDDA. Using the Lipschitz condition directly on the error $d$:
\begin{equation}\label{eq:AOT_nonlinear}
\langle \mcf[u]-\mcf[v], d\rangle
\le \|\mcf[u]-\mcf[v]\|\|d\|
\le L\|d\|^2.
\end{equation}
Note the crucial difference: this bound is proportional to $\|d\|^2$, not to $\|\nabla d\|\|d\|$ as in IDDA.

Using Lemma~\ref{lem:interp_coercivity}:
\begin{equation}\label{eq:AOT_nudging}
-\,\lambda\langle \sdtd, d\rangle
\le -\,\lambda\alpha\|d\|^2 + \frac{\lambda C^2h^2}{2} \|\nabla d\|^2.
\end{equation}

\textbf{Step 4: Combining terms.} Substituting \eqref{eq:AOT_dissipative}, \eqref{eq:AOT_nonlinear}, and \eqref{eq:AOT_nudging} into \eqref{eq:AOT_energy}:
\begin{equation}\label{eq:AOT_combined}
\frac{1}{2}\frac{d}{dt}\|d\|^2
\le -\mu\|\nabla d\|^2
+ L\|d\|^2
- {\lambda\alpha}\|d\|^2
+ \frac{\lambda C^2h^2}{2} \|\nabla d\|^2.
\end{equation}

Rearranging:
\begin{equation}\label{eq:AOT_final_energy}
\frac{1}{2}\frac{d}{dt}\|d\|^2
\le - \left(\lambda\alpha - L\right)\|d\|^2
- \left(\mu - \frac{\lambda C^2 h^2}{2}\right)\|\nabla d\|^2.
\end{equation}

Under condition \eqref{eq:AOT_cond_thm}, both coefficients are strictly positive. Define
$$
\gamma_{\text{AOT}} := \lambda\alpha - L > 0.
$$
Discarding the non-negative gradient term:
\begin{equation}\label{eq:AOT_gronwall}
\frac{d}{dt}\|d\|^2 \le -2\gamma_{\text{AOT}} \|d\|^2.
\end{equation}

Applying Gr\"onwall's inequality gives \eqref{eq:AOT_decay_thm}.
\end{proof}

\bibliographystyle{siamplain}
\bibliography{IDDA}

\end{document}